\documentclass[a4paper,11pt]{article}
\usepackage{graphicx} 
\usepackage{amsmath}
\usepackage{amssymb}
\usepackage{amsthm}
\usepackage{geometry}
\usepackage{hyperref}
\usepackage{cite}
\usepackage{float}
\usepackage{relsize}
\usepackage{subcaption}
\usepackage{comment}
\usepackage{indentfirst}
\usepackage{xcolor}

\usepackage{vruler}

\newtheorem{theorem}{Theorem}
\newtheorem{defn}[theorem]{Definition}
\newtheorem{prop}[theorem]{Proposition}
\newtheorem{lemma}[theorem]{Lemma}
\newtheorem{cor}[theorem]{Corollary}
\newtheorem{exa}{Example}[section]
\newtheorem{remark}[theorem]{Remark}
\newtheorem{conjecture}[theorem]{Conjecture}
\newtheorem{claim}{Claim}[theorem]
\newtheorem{case}{Case}[theorem]
\newtheorem{subcase}{Subcase}[case]

\newcommand{\Define}[1]{\textbf{#1}}

\title{On the maximum and negative frustration indices of graphs}
\author{Maximilien Gadouleau\footnote{Department of Computer Science, Durham University, Durham, UK. \texttt{m.r.gadouleau@durham.ac.uk}} \and Huiying Zeng\footnote{Department of Computer Science, Durham University, Durham, UK. \texttt{huiying.zeng@durham.ac.uk}}}
\date{\today}

\begin{document}

\maketitle

\begin{abstract}
    A signed graph is a  graph with signatures ($+1$ or $-1$) on its edges. A cycle is called positive if the product of its edge signatures is positive, and a signed graph is called balanced if each cycle in it is positive. The frustration index is the minimum number of edges whose deletion makes the signed graph balanced, which is considered to be a measurement of the imbalance of the signed graph. In this paper, we compare the frustration index of the all-negative signature with the maximum frustration index of all possible signatures on the unsigned graph. We classify some families of graphs into three scenarios: the all-negative signature does not maximise the frustration index, the all-negative signature maximises the frustration index non-uniquely, and the all-negative signature maximises the frustration index uniquely. For all three scenarios, we can exhibit chordal and non-chordal graphs alike. The classes we consider include apex trees, fan graphs, wheel graphs, and complete split graphs. Moreover, for the families of fan graphs and wheel graphs, we fully characterise and count the signatures maximising the frustration index. Throughout our study, we exhibit different classes of signed graphs for which the frustration index equals the number of edge-disjoint negative triangles. Moreover, as part of our study, we are able to refute three conjectures of Zaslavsky on the frustration index.
\end{abstract}

\section{Introduction} \label{section:introduction}

\subsection{Background} \label{subsection:background}

\paragraph{Signed graphs and their frustration index}

A signed graph $\Sigma = (G, \sigma)$ is a graph $G$ with a sign $\sigma(e) \in \{+1, -1\}$ on all its edges.
The sign of a cycle is the product of the signs of its edges.
Therefore, a cycle is positive (negative, respectively) if and only if it contains an even (odd, respectively) number of negative edges.
A signed graph is called balanced if it has no negative cycles.
Balanced signed graphs are pivotal to the theory of signed graphs, as they can be viewed as the natural analogue to bipartite graphs. 
Indeed, if $\Sigma$ is balanced, then its vertex set can be partitioned into $V = S \cup T$ with only positive edges inside $S$ and inside $T$, and only negative edges between $S$ and $T$.

The frustration index $l( \Sigma )$ is the minimum number of edges to remove from $\Sigma$ to obtain a balanced signed graph.
It is arguably the most important measure of imbalance for signed graphs, and as such has received a lot of attention \cite{Iacono, Katai, Sole, Bowlin, Martin, Sehrawat, Aref, Diaz, Chen, Shahul}.

Let us mention one important lower bound on the frustration index.
Let $p^-( \Sigma )$ ($p^-_\triangle( \Sigma )$, respectively) denote the maximum number of edge-disjoint negative cycles (triangles, respectively) in $\Sigma$. It is easy to see that
\begin{equation} \label{equation:p_v_l}
    p^-_\triangle( \Sigma ) \le p^-( \Sigma ) \le l( \Sigma ).
\end{equation}

\paragraph{Maximum and negative frustration indices of unsigned graphs}

Let $G$ be an unsigned graph. 
The maximum frustration index of $G$, denoted by $l_{\max}( G )$, is simply the maximum frustration index of a signature of $G$.

A naive way to obtain high frustration index consists in setting all edges to be negative, thus obtaining the signed graph $-G$.
The negative frustration index of $G$ is then defined as $l(-G)$.
It is actually equal to $|E(G)| - \text{MaxCut}(G)$, thus showing that computing the frustration index of signed graphs is NP-hard.

It is easy to find examples of graphs where the all-negative signature does not maximise the frustration index: for instance, if $G$ is a bipartite graph with a cycle, then $-G$ is balanced, while negating only one edge yields an unbalanced graph. 
Conversely, Petersdorf \cite{Petersdorf} showed that for complete graphs, the all-negative signature is the unique (up to switching-equivalence) signature that maximises the frustration index. 

Inspired by these results, Zaslavsky conjectured that if $G$ is chordal, then the negative signature
maximises the frustration index, i.e. $l_{\max}( G ) = l( -G )$. This conjecture, and two other ones by Zaslavsky (Conjectures \ref{conjecture:chordal}, \ref{conjecture:multipartite}, and \ref{conjecture:disjoint_cycles}), will be refuted in this paper.

\subsection{Contributions} \label{subsection:contributions}







\paragraph{Main contributions}
In this paper, we compare the frustration index of the negative signature (negative frustration index) with the maximum frustration index. 
In this paper, we show that for chordal and non-chordal graphs alike, all three scenarios can occur.
\begin{enumerate}
    \item The negative signature does not maximise the frustration index. \\
    Chordal example: any graph of the form $K_1 \vee S_n = S_2^{n-1}$, which is both an apex tree and a complete split graph  \textit{(new)}. \\
    Non-chordal example: any bipartite graph that is not a tree.

    \item The negative signature does maximise the frustration index, but not uniquely, i.e. there is another switching class that also maximises the frustration index. \\
    Chordal example: any fan graph $F_n$ \textit{(new)}. \\
    Non-chordal example: any wheel $W_{2k}$ with an even number of spokes \textit{(new)}.

    \item The negative signature maximises the frustration index uniquely, i.e. its switching class is the only one that maximises the frustration index. \\
    Chordal example: any complete graph $K_n$, any complete split graphs $S_n^2$ for $n \ge 3$ or $S_n^3$ for $n \ge 5$ \textit{(new)}. \\
    Non-chordal example: any odd cycle $C_{2k+1}$, any wheel $W_{2k+1}$ with an odd number of spokes \textit{(new)}.
\end{enumerate}
In particular, we exhibit the first examples of chordal graphs in the first scenario and the first examples of graphs in the second scenario, chordal and non-chordal alike.



\paragraph{Detailed contributions}
In more detail, in Section \ref{section:apex_trees} we prove that for any signed apex tree $A_n = A_n(h, T)$ (i.e. removing the vertex $h$ leaves out the tree $T$), $l_{\max}( A_n ) = \lfloor \frac{n}{2} \rfloor$ is given by the maximum number of edge- and endpoint-disjoint paths in $T$, while $l( - A_n )$ is given by the maximum number of edge- and endpoint-disjoint paths of odd length in $T$. In particular, for the star $S_n$, $l( -S_n ) = 1$, thus $K_1 \vee S_n$ is a family of chordal graphs whose negative frustration index is bounded (at $1$), while the maximum frustration index is unbounded. Moreover, we show that there is no apex tree (unless $T \in \{K_1, K_2\}$) such that the all-negative signature maximises the frustration index uniquely. We then study fan graphs, i.e. $F_n = K_1 \vee P_n$ where $P_n$ denotes the path on $n$ vertices. We are able to classify and count the switching classes that maximise the frustration index of the fan graph.

We move on to study wheel graphs in Section \ref{section:wheel_graphs}, i.e. $W_n = K_1 \vee C_n$ where $C_n$ denotes the cycle on $n$ vertices. We first show that $l( -W_n ) = l_{\max}( W_n ) = \lceil \frac{n}{2} \rceil$ for all $n$. We then show that the all-negative signature uniquely maximises the frustration index when $n$ is odd. On the other hand, for $n$ even this is not the case, and we characterise and count the number of switching classes that maximise the frustration index.

We finally study complete split graphs in Section \ref{section:complete_split_graphs}. We determine the negative frustration index of $S_n^m = K_n \vee \overline{K_m}$. We then prove that the all-negative signature maximises the frustration index uniquely for $S_n^2$ with $n \ge 3$ and for $S_n^3$ with $n \ge 5$.

\paragraph{Additional results}
An important problem is to exhibit signed graphs $\Sigma$ for which the frustration index $l( \Sigma )$ actually equals the number of edge-disjoint negative cycles $p^-( \Sigma )$.
As part of our study, we exhibit classes of signed graphs for which the bound in \eqref{equation:p_v_l} is tight: either $l( \Sigma ) = p^-( \Sigma )$ or even $l( \Sigma ) = p^-_\triangle( \Sigma )$. In more detail, we prove that: for any signed apex tree, $l( A_n, \sigma ) = p^-( A_n, \sigma )$; for any signed fan graph, $l( F_n, \sigma ) = p^-_\triangle( F_n, \sigma )$; and for any signed wheel graph, $l( W_n, \sigma ) = p^-_\triangle( W_n, \sigma )$ unless $(W_n, \sigma) = -W_{2k+1}$.

And finally, as mentioned above, in this paper we refute three conjectures of Zaslavsky on the frustration index.

\section{Preliminaries} \label{section:preliminaries}

\subsection{Signed graphs} \label{subsection:signed_graphs}

In this section, we review some basic concepts of signed graph theory. The reader is referred to \cite{Zaslavsky} for more details. A \textit{signed graph} $\Sigma$ is a pair $(G, \sigma)$ where 
$G = ( V, E )$ is a graph, called the \textit{underlying graph}, which we denote by $|\Sigma|$ and $\sigma: E \rightarrow \{+1, -1\}$ is the \textit{signature}. When the underlying unsigned graph is clear, we shall use the terms ``signed graph'' and ``signature'' interchangeably. The two simplest signed graphs on $G$ are the positive graph $+G$ (with $\sigma = +1$) and the negative graph $-G$ (with $\sigma = -1$).

For a vertex subset $X$ in $\Sigma$, we denote
\begin{align*}
    \partial X &= \{ e \in E: |e \cap X| = 1 \}, \\
    \partial^+ X &= \{ e \in \partial X, \sigma( e ) = +1 \}, \\
    \partial^- X &= \{ e \in \partial X, \sigma( e ) = -1 \}.
\end{align*}
We further denote $d( X ) = | \partial X |$, $d^+( X ) = | \partial^+ X |$, and $d^-( X ) = | \partial^- X |$. In particular, the \textit{positive degree} of a vertex $v \in V$ is the number of positive edges incident to $v$, and the \textit{negative degree} of a vertex $v\in V(\Sigma)$ is the number of negative edges incident to $v$, which we denote by $d^+(v)$ and $d^-(v)$ respectively. The \textit{degree} of $v$ is $d(v)=d^+(v)+d^-(v)$, which is the number of edges incident to $v$.




The sign of a cycle $C = e_1e_2\cdots e_l$ is the product of its edge signs: $\sigma(C):=\sigma(e_1)\sigma(e_2)\cdots\sigma(e_l).$ \textit{Switching} a signed graph $\Sigma$ means negating the signs of all edges between a vertex subset $X$ and its complement. The switched graph is written $\Sigma^X=(|\Sigma|, \sigma^X)$. We have
\[
    \sigma^X( e ) = 
    \begin{cases}
        - \sigma( e ) &\text{if } e \in \partial X \\
        \sigma( e ) &\text{otherwise}.
    \end{cases}
\]
   
Switching does not change the sign of any cycle. If $\Sigma$ can be switched to become $\Sigma'$, we say $\Sigma$ and $\Sigma'$ are \textit{switching equivalent}. A class of signed graphs under switching equivalence is called a \textit{switching class}. 

For a spanning tree $T$ in a graph $G$, an edge $e$ in $G$ but not in $T$ is called a \textit{chord}. A cycle formed by adding a chord to the spanning tree is called a \textit{fundamental cycle}. Given any spanning tree, the corresponding fundamental cycle set has size $(|E(G)| - |V(G)| +1)$. Different spanning trees induce different fundamental cycle sets. Given any spanning tree and the corresponding fundamental cycle set, two signed graphs with the same underlying graph are switching equivalent if and only if they share the same set of negative fundamental cycles.

A signed graph $\Sigma$ is called \textit{balanced} if every cycle in it is positive. Equivalently, $\Sigma = ( G, \sigma )$ is balanced if $\Sigma$ is in the switching class of $+G$, i.e. there exists $X \subseteq V$ such that $\sigma( e ) = -1$ if and only if $e \in \partial X$.  Similarly, a signed graph is \textit{antibalanced} if it belongs to the switching class of $-G$, i.e. there exists $X \subseteq V$ such that $\sigma( e ) = +1$ if and only if $e \in \partial X$. This is equivalent to the definition that a signed graph $\Sigma$ is antibalanced if $-\Sigma$ is balanced. We note that an antibalanced graph does not contain any positive triangles.

\subsection{Frustration index} \label{subsection:frustration_index}

The \textit{frustration index} of $\Sigma$ is the minimum number of edges whose deletion makes $\Sigma$ balanced, which we denote by $l(\Sigma)$. Equivalently, the frustration index is equal to the minimum number of edges whose negation makes $\Sigma$ balanced. A signed graph is balanced if and only if its frustration index equals to $0$. 

Let us review two simple bounds on the frustration index. In a signed graph $\Sigma$, we must at least negate one edge in each edge-disjoint negative cycles in order to eliminate those negative cycles. In addition, negating all negative edges in $\Sigma$ will make it balanced. Let $p^-( \Sigma )$ be the number of edge-disjoint negative cycles in $\Sigma$. We obtain the following bounds for the frustration index:
\begin{equation}\label{eq:bounds}
    p^-(\Sigma) \le l(\Sigma) \le | E^-(\Sigma) |.
\end{equation}
Replacing a positive edge with a path of two negative edges in a signed graph does not change the maximum number of edge-disjoint negative cycles and the frustration index, which are direct results from the sign-preserving cycle bijection in \cite{Zaslavsky2018}.

Switching does not change the frustration index, but it can change the number of negative edges. In fact, the frustration index of $\Sigma$ is the smallest number of negative edges in a switching of $\Sigma$\cite{Zaslavsky2018}: 
\[
    l( \Sigma ) = \min \left\{ | E^-( \Sigma^S ) | : S \subseteq V \right\}.
\]

\begin{defn}
    A signed graph is called minimum if the number of negative edges in it equals its frustration index.
\end{defn}
In each switching class, there is at least one minimum signed graph. The following is an important theorem to find maximum frustration.
\begin{theorem}\label{thm:properly switched}\cite{Iacono}
    A signed graph $\Sigma$ is minimum if and only if $d^-(S) \le d^+(S)$ for each $S \subseteq V$.
\end{theorem}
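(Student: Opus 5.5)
The statement is a characterisation of minimum signatures, and both directions follow from one identity. Fix $S \subseteq V$ and compare $\Sigma$ with its switching $\Sigma^S$. Switching at $S$ only changes the signs of edges in $\partial S$, so every negative edge of $\partial S$ becomes positive and every positive edge of $\partial S$ becomes negative, while all other edges are untouched. This gives
\[
    |E^-(\Sigma^S)| = |E^-(\Sigma)| - d^-(S) + d^+(S).
\]
I will take the characterisation $l(\Sigma) = \min\{ |E^-(\Sigma^S)| : S \subseteq V \}$ quoted earlier from \cite{Zaslavsky2018} as given. The proof then only has to check that the minimum is attained at $S = \emptyset$ exactly when $d^+(S) - d^-(S) \ge 0$ for all $S$.

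\emph{Only if.} Suppose $\Sigma$ is minimum, so that $|E^-(\Sigma)| = l(\Sigma)$. For any $S$, the set $E^-(\Sigma^S)$ is a set of edges whose negation turns $\Sigma^S$ into $+G$, which is balanced. Since $\Sigma^S$ is switching equivalent to $\Sigma$, negating the same set of edges in $\Sigma$ yields a graph switching equivalent to $+G$, hence balanced. Therefore $|E^-(\Sigma^S)| \ge l(\Sigma) = |E^-(\Sigma)|$. Substituting the identity gives $d^+(S) - d^-(S) \ge 0$, that is, $d^-(S) \le d^+(S)$.

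\emph{If.} Suppose $d^-(S) \le d^+(S)$ for every $S \subseteq V$. By the identity, $|E^-(\Sigma^S)| \ge |E^-(\Sigma)|$ for every $S$. Hence $|E^-(\Sigma)|$ is the minimum in the formula for $l(\Sigma)$, and the minimum is attained at $S = \emptyset$. So $|E^-(\Sigma)| = l(\Sigma)$ and $\Sigma$ is minimum.

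The only substantive point is the formula $l(\Sigma) = \min_S |E^-(\Sigma^S)|$. If it had to be reproved rather than quoted, the argument would run as follows. Let $F$ be a minimum set of edges whose negation makes $\Sigma$ balanced. Then the negated graph equals $(+G)^X$ for some $X \subseteq V$. Comparing signatures, $F$ is exactly the set of negative edges of $\Sigma^X$, so $l(\Sigma) \ge \min_S |E^-(\Sigma^S)|$. The reverse inequality is the argument already used in the ``only if'' direction. Everything else is bookkeeping with the switching identity.
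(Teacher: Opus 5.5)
Your proof is correct and complete. The paper gives no proof of its own here, since the result is simply quoted from \cite{Iacono}. Your argument is the standard one: the switching identity $|E^-(\Sigma^S)| = |E^-(\Sigma)| - d^-(S) + d^+(S)$ combined with $l(\Sigma)=\min_S |E^-(\Sigma^S)|$, a formula the paper itself states just before the theorem. Your self-contained justification of that formula, via the fact that a balanced signature is a switching of $+G$, is also sound.
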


\begin{theorem}\label{thm:spanningtree}
    \cite{Zaslavsky1982}~Given a signed graph $\Sigma$ and a spanning tree $T$ of $\Sigma$, there exists a switched graph $\Sigma'$ such that $\Sigma'$ has any desired signs on $T$.
\end{theorem}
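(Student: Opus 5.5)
The statement to prove is Theorem~\ref{thm:spanningtree}: for a spanning tree $T$ of $\Sigma$ and any prescribed signs $\tau: E(T) \to \{+1,-1\}$, some switching of $\Sigma$ carries exactly these signs on $T$. I will construct the switching set $X \subseteq V$ explicitly, by induction along the tree. We may assume $G$ is connected, since otherwise we treat each component with its own spanning tree.

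Fix a root $r \in V$ and put $r \notin X$. For every other vertex $v$, let $P_v$ be the unique $r$--$v$ path in $T$. Define $v \in X$ exactly when the number of edges $e \in P_v$ with $\sigma(e) \neq \tau(e)$ is odd. Equivalently, we process vertices in breadth-first order from $r$. When $v$ is reached from its parent $u$ via the edge $e = uv$, $v$ gets the same membership as $u$ if $\sigma(e) = \tau(e)$, and the opposite membership otherwise. Because $T$ is a tree, each non-root vertex has exactly one parent edge, so this is well defined.

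Now check an arbitrary tree edge $e = uv$, with $u$ the parent of $v$. By the switching formula, $\sigma^X(e) = -\sigma(e)$ if and only if $e \in \partial X$. By construction, $e \in \partial X$ if and only if $u$ and $v$ have different memberships in $X$, which happens if and only if $\sigma(e) \neq \tau(e)$. Hence $\sigma^X(e) = \tau(e)$ in every case.

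So $\Sigma' = \Sigma^X$ has the desired signs on $T$. Edges outside $T$ are whatever the switching makes them. This is consistent with the earlier remark that the switching class is determined by the signs of the fundamental cycles.

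There is no real obstacle here. The only point needing care is well-definedness, and it relies on $T$ being acyclic: each vertex is reached from $r$ along exactly one path, so no vertex receives conflicting assignments.
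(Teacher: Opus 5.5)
Your proof is correct: putting $v \in X$ exactly when the tree path $P_v$ from the root contains an odd number of edges on which $\sigma$ and the target signs $\tau$ disagree places each tree edge in $\partial X$ precisely when its sign must flip, so $\sigma^X = \tau$ on $T$. The paper gives no proof of this statement and only cites Zaslavsky's 1982 paper for it; your argument is the standard one behind that citation, equivalently switching by the function $\theta(v)=\prod_{e\in P_v}\sigma(e)\tau(e)$.
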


For a graph $G$, the frustration index of the all-negative signed graph $-G$ is given by 
\[
    l(-G) = |E(G)| - \max_{X \subseteq V(G)}|E(X, \overline{X})|,
\]
the complement of the maximum cut size in $G$ \cite{Zaslavsky2018}. We call it the \textit{negative frustration index} of $G$. In particular, this shows that computing the frustration index of signed graphs is NP-hard.
    
For a graph $G$, the maximum frustration index over all signatures is denoted by
\[    
    l_{\max} (G) := \max_{\sigma: E(G)\rightarrow\{+1, -1\}}l(G, \sigma).
\]
We call it the \textit{maximum frustration index} of $G$.

The maximum frustration index and negative frustration index are based on the cycle structure of the graph; moreover, because subdividing an edge does not change the maximum frustration index. As such, henceforth we only consider graphs that are $2$-connected and where each vertex of degree $2$ is contained in a triangle.

\subsection{Negative frustration v maximum frustration} \label{subsection:negative_v_maximum}

Our main goal in this paper is to compare the negative frustration index $l( -G )$ to the maximum frustration index $l_{\max}( G )$. It is clear that not all graphs $G$ satisfy $l( -G ) = l_{\max}( G )$; for instance, if $G$ is a non-forest bipartite graph, then $l_{\max}( G ) > 0 = l( -G )$. On the other hand, Petersdorf proved that this was the case for the complete graphs.

\begin{theorem}\label{thm:complete}
    \cite{Petersdorf}For the complete graph $K_n$ with $n$ vertices,
    \[
        l_{\max}(K_n) = l(-K_n) = \left\lfloor \frac{(n-1)^2}{4} \right\rfloor.
    \]
    Additionally, the signatures whose frustration index achieves the maximum are precisely those in the switching class of $-K_n$.
\end{theorem}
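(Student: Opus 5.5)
The plan is to prove the equality in three parts: compute $l(-K_n)$, bound every signature from above, and then show that only the switching class of $-K_n$ attains the bound.

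\emph{The value of $l(-K_n)$.} By the max-cut formula, $l(-K_n)=\binom{n}{2}-\max_X |X|(n-|X|)$. The cut is maximised by a balanced bipartition, $|X|=\lfloor n/2\rfloor$. If $n=2k$, this gives $k(k-1)=\lfloor (2k-1)^2/4\rfloor$; if $n=2k+1$, it gives $k^2$, as required.

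\emph{Upper bound for an arbitrary signature.} We induct on $n$ using Theorem~\ref{thm:properly switched}. Let $(K_n,\sigma)$ be a minimum signature. Taking $S=\{v\}$ gives $d^-(v)\le \lfloor (n-1)/2\rfloor$ for every vertex $v$. Remove a vertex $v$ and switch $K_n-v$ to a minimum signature $\sigma'$, which has at most $l_{\max}(K_{n-1})$ negative edges. Extend this switching to $v$ by choosing $v$ on whichever side makes at most $\lfloor (n-1)/2\rfloor$ of its edges negative; one of the two choices does this, because switching $v$ exchanges $d^+(v)$ and $d^-(v)$. This gives
\[
l(\sigma)\le \left\lfloor \tfrac{(n-2)^2}{4}\right\rfloor+\left\lfloor \tfrac{n-1}{2}\right\rfloor=\left\lfloor \tfrac{(n-1)^2}{4}\right\rfloor .
\]
The last equality is checked separately for each parity of $n$. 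Together with the value of $l(-K_n)$, this proves $l_{\max}(K_n)=l(-K_n)$.

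\emph{Uniqueness.} This is the main obstacle. Suppose $l(\sigma)$ equals the bound, so every step above is tight. Then for every vertex $v$, $l(\sigma|_{K_n-v})=l_{\max}(K_{n-1})$, and by induction each $K_n-v$ is antibalanced. I would argue via triangles, since a signed complete graph is antibalanced exactly when all its triangles are negative. The reason is that the triangles through a fixed vertex generate the cycle space, so negativity of all of them forces negativity of each cycle of odd length and positivity of each cycle of even length.

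For $n\ge 4$, every triangle misses some vertex and so lies in some $K_n-v$. Hence every triangle of $K_n$ is negative, and $\sigma$ is antibalanced, i.e.\ switching equivalent to $-K_n$.

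The base cases need to be checked directly. For $n=3$, the maximum value $1$ is attained only by the negative triangle. For $n\le 2$ there are no cycles, so the statement is trivial.

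The care needed is in confirming that tightness of the global bound really forces tightness on every $K_n-v$, not just on the chosen one. This holds because the inequality chain can be run with any vertex $v$ removed: if some $K_n-v$ had frustration index below $\lfloor (n-2)^2/4\rfloor$, the bound would be strict.
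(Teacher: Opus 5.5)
Your argument is correct. The paper does not prove this theorem itself (it cites Petersdorf), but your induction is the same mechanism the paper uses in Proposition~\ref{prop:S_n+k+1^n}, specialised to $K_n=S_{n-1}^1$: Theorem~\ref{thm:disjoint_vertex_subset} with $S=\{v\}$ gives $l(\sigma)\le l(\sigma|_{K_n-v})+\lfloor (n-1)/2\rfloor$, and uniqueness follows by deleting a vertex outside a putative positive triangle and applying the inductive hypothesis, with $K_3$ as the base case.
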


Based on Petersdorf's result, Zaslavsky raised two conjectures regarding maximum frustration index of some graph classes. The first one on chordal graphs appears on his webpage, while the second one appears in \cite{Zaslavsky}.


\begin{conjecture}\label{conjecture:chordal}
    Chordal graphs attain maximum frustration index when signed all negative.
\end{conjecture}
\begin{conjecture}\label{conjecture:multipartite}
    \cite{Zaslavsky} Complete multipartite graphs $K_{n_1, n_2, \cdots, n_k}$ where $k\ge 3$ attain maximum frustration index when signed all negative.
\end{conjecture}

Unfortunately, the chordal graph $K_{3,1,1}$ is a counterexample to both conjectures, since
\[
    l( -K_{3,1,1} ) = 1 < 2 = l_{\max}( K_{3,1,1} ).
\]

Denote the maximum number of edge-disjoint cycles (triangles, resp.) in a graph $G$ by $p( G )$ ($p_\triangle( G )$, resp.). This can be naturally generalised to signed graphs, as $p( \Sigma ) = p( |\Sigma| )$ and $p_\triangle( \Sigma ) = p_\triangle( |\Sigma| )$. We then have $p^-( \Sigma ) \le l( \Sigma )$ as above, while the inequality $p^-( \Sigma ) \le p( \Sigma )$ follows from the definition. Zaslavsky conjectured that we always have equality in either of those two inequalities.
\begin{conjecture}\label{conjecture:disjoint_cycles} \cite{Zaslavsky2018}
    \[
        p^-(\Sigma) = \min (l(\Sigma), p(\Sigma)).
    \]
\end{conjecture}
This conjecture is also false, as we shall prove in Section \ref{section:wheel_graphs}. In fact, our counterexample satisfies $p^-(\Sigma) < \min (l(\Sigma), p_\triangle(\Sigma))$.




\subsection{Notation} \label{subsection:notation}

The scope of this paper is families of graphs (not necessarily chordal) that contain a dominating vertex.  For two vertex-disjoint graphs $G_1$ and $G_2$, the join $G_1\vee G_2$ is a new graph with the vertex set $V(G_1)\cup V(G_2)$, and the edge set of $G_1\vee G_2$ is exactly $E(G_1)\cup E(G_2)$ as well as the edges connecting all vertices in $G_1$ and all vertices in $G_2$. In particular,
\begin{itemize}
    \item $F_n = K_1 \vee P_n$ is the \Define{fan graph} on $n+1$ vertices, where $P_n$ is the path on $n$ vertices; 

    \item $W_n = K_1 \vee C_n$ is the \Define{wheel graph} on $n+1$ vertices, where $C_n$ is the cycle on $n$ vertices; 

    \item $S_n^m = K_n\vee\overline{K_m} = K_1 \vee S_{n-1}^m$ is a \Define{complete split graph}.
\end{itemize}

\section{Apex trees} \label{section:apex_trees}

Let $A_n(h, T)$ be an apex tree such that removing vertex $h$ and the edges incident to it makes a tree $T$, and $d(h) = n$. Based on our assumption of only considering $2$-connected graphs, we assume that all leaves of the tree $T$ are in the neighbourhood of $h$. Sometimes we use $A_n$ instead of $A_n(h, T)$ for convenience.

\subsection{Maximum frustration index} \label{subsection:lmax_apex_tree}

We first determine the frustration index of any signed apex tree $(A_n, \sigma)$.

\begin{theorem} \label{thm:apex_characterization}
For any tree $T$ and the apex tree $A_n = A_n(h, T)$,  $l(A_n,\sigma) = p^-(A_n,\sigma)$. 
\end{theorem}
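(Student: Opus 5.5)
The plan is to reduce the statement to Menger's theorem after a normalising switch. Since $T$ is a tree, we can extend it to a spanning tree $T'$ of $A_n$ by adding one edge $hv_0$ with $v_0\in N(h)$. By Theorem~\ref{thm:spanningtree} there is a switching of $(A_n,\sigma)$ in which every edge of $T$ is positive. Switching changes neither cycle signs, nor $l$, nor $p^-$, so we may assume $\sigma|_{E(T)}\equiv +1$. Let $N=N(h)$, and split it as $N=N^+\cup N^-$ according to the sign of the edge $hv$.

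\textbf{Describing the cycles.} Every cycle of $A_n$ must pass through $h$, because $T$ is acyclic. Such a cycle passes through $h$ exactly once, so it has the form $h\,u\,P_{uv}\,v\,h$, where $u,v\in N$ are distinct and $P_{uv}$ is the unique $u$--$v$ path in $T$. All tree edges are positive, so the sign of this cycle is $\sigma(hu)\sigma(hv)$. Hence a cycle is negative if and only if one of its two $h$-edges goes to $N^+$ and the other goes to $N^-$.

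\textbf{Auxiliary network.} Build $H$ by splitting $h$ into two new vertices: a source $s$ adjacent to every vertex of $N^+$, and a sink $t$ adjacent to every vertex of $N^-$. The edge sets of $H$ and $A_n$ are identified in the obvious way.

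\begin{itemize}
\item Each negative cycle of $A_n$ corresponds to an $s$--$t$ path in $H$ using the same edges, namely $s\,u\,P_{uv}\,v\,t$.
\item Conversely, every $s$--$t$ path in $H$ has the form $s\,u\,Q\,v\,t$, where $Q$ is a path inside $T$ and so must equal $P_{uv}$. It therefore corresponds to a negative cycle of $A_n$.
\end{itemize}

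Under this identification, edge-disjointness is preserved in both directions. So $p^-(A_n,\sigma)$ equals the maximum number of edge-disjoint $s$--$t$ paths in $H$. By Menger's theorem, this equals the size of a minimum $s$--$t$ edge cut $D\subseteq E(H)=E(A_n)$.

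\textbf{Upper bound on $l$.} After deleting $D$ from $A_n$, every remaining cycle still has the form described above. A remaining negative cycle would give an $s$--$t$ path in $H-D$, which is impossible because $D$ is a cut. So $(A_n,\sigma)-D$ is balanced, and $l(A_n,\sigma)\le |D| = p^-(A_n,\sigma)$. Together with the general inequality $p^-\le l$ from \eqref{eq:bounds}, this gives equality.

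\textbf{Main obstacle.} The delicate step is the exact bijection between negative cycles and $s$--$t$ paths, in both directions. One must check two things. First, an $s$--$t$ path in $H$ cannot return to $s$ or $t$ in its interior, so it really uses exactly one edge at each of its two ends. Second, the tree segment of the path is the unique tree path, even if it passes through other vertices of $N$. Once this correspondence is verified, the rest is a direct application of Menger's theorem, and it uses only that cycles of an apex tree pass through $h$ exactly once.
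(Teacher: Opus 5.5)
Your proof is correct and is essentially the paper's argument. Both split $h$ into a source and a sink so that negative cycles become $s$--$t$ paths and minimum $s$--$t$ edge cuts become balancing edge sets, and then apply Menger's theorem. Your normalisation is a cleaner implementation: switching $T$ to be all-positive lets $N^+$ and $N^-$ serve directly as the source and sink sides, so you can cite the undirected edge version of Menger. The paper instead subdivides into an all-negative bipartite tree, builds a directed edge gadget, and needs its ``fundamental arc'' claim.
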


\begin{proof}
We know that $l(A_n,\sigma) \ge p^-(A_n,\sigma)$. We now prove the reverse inequality. According to Theorem~\ref{thm:spanningtree}, we assume that $(A_n,\sigma)$ is switched so that all its negative edges are in $T$ without loss of generality.

By replacing every positive edge in the tree $T$ with $-P_3$, we get a new signed apex tree $(A_n',\sigma')$ where removing the vertex $h$ leaves a tree $T'$ subdivided from $T$, and $l(A_n, \sigma) = l(A_n',\sigma')$. In $(A_n', \sigma')$, a cycle is negative if and only if it is odd.

From $T' = (V, E)$ with bipartition $V = L \cup R$, construct the directed graph $\hat{T} = ( \hat{V}, \hat{A})$ as follows. First, $\hat{V} = V \cup \{ s, t \} \cup \{ a_e, b_e : e \in E \}$. Second, 
\[
    \hat{A} = \{ (s, l) : l \in L \cap N(h) \} \cup \{ (r, t) : r \in R \cap N(h)\} \cup \{ (l, a_e), (r, a_e), (a_e, b_e), (b_e, l), (b_e, r) : e = lr, l \in L, r \in R  \}.
\]

\begin{claim} \label{claim:cycle_path_bijection}
Odd cycles in $(A_n',\sigma')$ are in bijection with the $s$-$t$ paths in $\hat{T}$.
\end{claim}

\begin{proof}   
Any $s$-$t$ path in $\hat{T}$ is of the form $P = s l_1 a_{l_1 r_1} b_{l_1 r_1} r_1 \dots a_{l_k r_k} b_{l_k r_k} r_k t$ for some $l_1, \dots, l_k \in L$ and $r_1, \dots, r_k \in R$ such that $l_1 r_1 l_2 r_2 \dots l_k r_k$ is a path in $T'$. Let $\phi( P ) = h l_1 r_1 l_2 r_2 \dots l_k r_k h$; then $P$ is an odd cycle in $(A_n',\sigma')$, and any odd cycle in $(A_n',\sigma')$ is of the form $\phi( P )$ for some $s$-$t$ path $P$ in $\hat{T}$. It is finally clear that $\phi$ is injective.
\end{proof}

\begin{claim} \label{claim:cycle_path_disjoint}
If two $s$-$t$ paths in $\hat{T}$, $P$ and $P'$, are arc-disjoint, then their corresponding odd cycles in $(A_n',\sigma')$, $\phi( P )$ and $\phi( P' )$, are edge-disjoint.
\end{claim}

\begin{proof}    
Let $P = s l_1 a_{l_1 r_1} b_{l_1 r_1} r_1 \dots a_{l_k r_k} b_{l_k r_k} r_k t$ and $P' = s l'_1 a_{l'_1 r'_1} b_{l'_1 r'_1} r'_1 \dots a_{l'_j r'_j} b_{l'_j r'_j} r'_j t$. Since they are arc-disjoint, we obtain that $l_xr_x \ne l'_yr'_y$ for all $1 \le x \le k$ and $1 \le y \le j$. Thus, $\phi( P ) = h l_1 r_1 l_2 r_2 \dots l_k r_k h$ and $\phi( P' ) = h l'_1 r'_1 l'_2 r'_2 \dots l'_j r'_j h$ are edge-disjoint. 
\end{proof}

Say that an arc in $\hat{A}$ is fundamental if it belongs to 
\[
    \{ (s, l) : l \in L \cap N(h) \} \cup \{ (r, t) : r \in R \cap N(h)\} \cup \{ (a_e, b_e) : e = lr, l \in L, r \in R  \}. 
\]

\begin{claim} \label{claim:cycle_path_cover}
There exists a minimum set of arcs covering all $s$-$t$ paths in $\hat{T}$ containing only fundamental arcs. 
\end{claim}

\begin{proof}
If a set of arcs covering all $s$-$t$ paths contains an arc in $\{ (l, a_e), (r, a_e), (b_e, l), (b_e, r)\}$ for $e = lr$, then replacing that arc with $(a_e, b_e)$ yields another set of arcs covering all $s$-$t$ paths.
\end{proof}

Let $\alpha$ denote the maximum number of arc-disjoint $s$-$t$ paths in $\hat{T}$. We then have $\alpha \le p^-( A_n, \sigma )$ from Claims \ref{claim:cycle_path_bijection} and \ref{claim:cycle_path_disjoint}. Let $\beta$ denote the minimum size of a set of arcs that cover all $s$-$t$ paths. Then $l(A_n', \sigma') \le \beta$ follows from Claims \ref{claim:cycle_path_bijection} and \ref{claim:cycle_path_cover}: removing the edges corresponding a minimum set of fundamental arcs corresponds to removing edges in $(A_n', \sigma')$ that leaves the graph without any negative cycles. By Menger's theorem, $\alpha = \beta$ and hence
\[
    l( A_n', \sigma') \le \beta = \alpha \le p^-( A_n', \sigma').
\]
 Therefore,
 \[
    l(A_n, \sigma) = l( A_n', \sigma') = p^-( A_n', \sigma' ) = p^-( A_n, \sigma ).
\]
\end{proof}

For the all-negative signature, a cycle is negative if and only if it has an odd number of edges. As such, $p^-( - A_n(h,T) )$ is related to odd paths (i.e. with an odd number of edges) in $T$.

\begin{cor} \label{corollary:-An}
For an apex tree $A_n = A_n( h, T )$, the negative frustration index is given by the maximum cardinality of a collection of edge-disjoint and endpoint-disjoint odd paths in $T$.
\end{cor}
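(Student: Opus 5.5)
By Theorem~\ref{thm:apex_characterization}, applied to the all-negative signature, we have $l(-A_n) = p^-(-A_n)$. It therefore suffices to show that $p^-(-A_n)$ equals the maximum cardinality of a collection of edge-disjoint, endpoint-disjoint odd paths in $T$. Under the all-negative signature a cycle is negative exactly when it has odd length, so $p^-(-A_n)$ is the maximum number of edge-disjoint odd cycles in $A_n$. The plan is to construct a correspondence between such families of cycles and such families of paths, in both directions.

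First we describe the odd cycles of $A_n$. A cycle avoiding $h$ would lie in the tree $T$, which is impossible, so every cycle passes through $h$. Such a cycle has the form $h\,u \cdots v\,h$, where $u \cdots v$ is the unique $u$-$v$ path $P_{uv}$ in $T$ and $u,v \in N(h)$ are distinct. Its length is $|E(P_{uv})| + 2$, so the cycle is odd precisely when $P_{uv}$ is an odd path. Two such cycles $h P_{uv} h$ and $h P_{u'v'} h$ are edge-disjoint exactly when the following two conditions hold:
\begin{itemize}
\item[(i)] $P_{uv}$ and $P_{u'v'}$ are edge-disjoint in $T$;
\item[(ii)] $\{u,v\}\cap\{u',v'\}=\emptyset$, since otherwise the two cycles share a spoke $hu$.
\end{itemize}
So a family of edge-disjoint odd cycles is exactly a family of edge-disjoint, endpoint-disjoint odd paths in $T$ whose endpoints lie in $N(h)$.

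The remaining step, and the only real obstacle, is to remove the requirement that endpoints lie in $N(h)$. We handle it with the standing assumption that every leaf of $T$ is adjacent to $h$. Because $T$ is a tree, $N(h)$ need not be all of $V(T)$, so an odd path in $T$ may have an endpoint $x \notin N(h)$, and we must show such paths can be adjusted. We proceed as follows.
\begin{itemize}
\item[1.] Take a maximum family $\mathcal{F}$ of edge-disjoint, endpoint-disjoint odd paths in $T$, and among all maximum families choose one that maximises the total length.
\item[2.] Suppose some $P \in \mathcal{F}$ has an endpoint $x \notin N(h)$. Then $x$ is not a leaf, so $x$ has degree at least $2$ in $T$, and there is an edge $xy$ not on $P$.
\item[3.] Extend $P$ by two edges along a branch away from $P$ if possible. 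This preserves parity. It remains disjoint from the other paths provided that branch is unused by them; failing that, reroute by exchanging path segments at the point of conflict.
\end{itemize}
This exchange argument is the delicate part, since the naive extension can collide with other paths in $\mathcal{F}$. If the exchange becomes too messy, I would fall back to the flow formulation from the proof of Theorem~\ref{thm:apex_characterization}, applied directly to $T$. We would subdivide appropriately and connect $s$ and $t$ only to the vertices of $N(h)$ in $L$ and $R$ respectively. Menger's theorem then gives the path packing, and we would argue that restricting endpoints to $N(h)$ loses nothing because any odd path can be extended toward leaves. Indeed, I suspect the intended statement implicitly takes endpoints in $N(h)$. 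In that case the corollary follows immediately from the cycle–path correspondence above together with Theorem~\ref{thm:apex_characterization}, and I would state it in that form.
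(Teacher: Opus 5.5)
Your main line is the paper's own justification: $l(-A_n)=p^-(-A_n)$ by Theorem~\ref{thm:apex_characterization}, odd cycles correspond to odd $u$--$v$ paths of $T$ with $u,v\in N(h)$, and edge-disjointness of cycles corresponds to your conditions (i)--(ii). This proves the corollary in the form where all endpoints are required to lie in $N(h)$. That is also the form the paper uses later: in the proof of Theorem~\ref{theorem:not_unique-max_apex_tree} the family $\mathcal{P}$ is taken ``whose endpoints are in $N(h)$''. Your suspicion is correct, and you should stop at that version.

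Do not try to carry out steps 1--3, and drop the claim that restricting endpoints to $N(h)$ ``loses nothing''. That claim is false, so no exchange or extension argument can work, even under all the standing assumptions. Here is a counterexample.
\begin{itemize}
\item Let $T$ be the spider with centre $x$ and legs $x y_1 z_1$, $x y_2 z_2$, $x y_3 w u$. Let $h$ be adjacent to every vertex of $T$ except $x$.
\item This apex tree satisfies the standing assumptions. It is $2$-connected, every leaf of $T$ lies in $N(h)$, and the degree-$2$ vertices $z_1,z_2,u$ lie in triangles with $h$.
\item Take the bipartition $L=\{x,z_1,z_2,w\}$, $R=\{y_1,y_2,y_3,u\}$ of $T$. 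The cut $(L\cup\{h\},R)$ contains $7+4=11$ of the $14$ edges, so $l(-A_n)\le 3$. In fact $l(-A_n)=3$, because of the edge-disjoint negative triangles $h y_1 z_1$, $h y_2 z_2$, $h w u$.
\item Equivalently, any family of endpoint-disjoint odd paths with endpoints in $N(h)$ has at most $|L\cap N(h)|=3$ members, since each such path has exactly one endpoint in $L$.
\item However, $\{x y_3,\, y_1 z_1,\, y_2 z_2,\, w u\}$ is a family of four edge- and endpoint-disjoint odd paths in $T$.
\end{itemize}
So the literal, unrestricted reading holds only when $N(h)=V(T)$, that is, for $K_1\vee T$ (stars, fans, the double star), where it coincides with your restricted statement. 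The same caveat applies to the paper's remark $l(-A_n)\ge\mu(T)$, which fails in this example since $\mu(T)=4$.
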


A matching in $T$ is a collection of edge-disjoint and endpoint-disjoint odd paths in $T$. As such, Corollary \ref{corollary:-An} yields $l( -A_n( h, T ) ) \ge \mu( T )$, where $\mu$ denotes the matching number. However, this lower bound is not tight in general, as we will see in Example \ref{example:double-star} below.

\medskip

We now move on to determining the maximum frustration index of apex trees.

\begin{theorem}\label{thm:apex_max_frustration}
    For an apex tree $A_n = A_n(h, T)$, the maximum frustration index is
    \[
        l_{\max}(A_n) = \left\lfloor \frac{n}{2} \right\rfloor. 
    \]
\end{theorem}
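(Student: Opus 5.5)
Both bounds go through Theorem~\ref{thm:apex_characterization}, which gives $l(A_n,\sigma)=p^-(A_n,\sigma)$ for every signature. So it suffices to show two things: every collection of edge-disjoint cycles in $A_n$ has at most $\lfloor n/2\rfloor$ members, and some signature admits $\lfloor n/2\rfloor$ edge-disjoint negative cycles.

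\emph{Upper bound.} Every cycle of $A_n$ either passes through $h$ or lies inside $T$. Since $T$ is acyclic, every cycle passes through $h$, and it does so using exactly two edges incident to $h$. Edge-disjoint cycles therefore use pairwise disjoint pairs of edges at $h$. Hence
\[
p^-(A_n,\sigma)\le p(A_n)\le \lfloor d(h)/2\rfloor=\lfloor n/2\rfloor .
\]

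\emph{Lower bound.} Let $N=N(h)$, so $|N|=n$. I want $\lfloor n/2\rfloor$ pairwise edge-disjoint paths in $T$ whose endpoints are pairwise distinct vertices of $N$. Closing each such path through $h$ gives edge-disjoint cycles. Then I choose a signature making all of them negative. Since the cycles are edge-disjoint, I can put one negative edge on each cycle (for instance its edge $hx$ at one endpoint) and make every other edge positive.

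The combinatorial core is the following pairing lemma. In a tree $T$ and a vertex set $N\subseteq V(T)$ with $|N|=n$, one can pair up $2\lfloor n/2\rfloor$ vertices of $N$ so that the tree paths joining paired vertices are pairwise edge-disjoint. I would prove this by induction using a leaf-rooted structure. Root $T$ anywhere and process the vertices bottom-up. At each vertex $v$, each child subtree passes up at most one unmatched $N$-vertex through the edge to its parent. At $v$, I pair the unmatched vertices arriving from different children, together with $v$ itself if $v\in N$, as far as possible. Then at most one unmatched vertex is passed upward through the edge from $v$ to its parent.

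Each tree edge is used by at most one path: namely, the path carrying the single unmatched vertex that goes up through it. Paths formed at $v$ use edges in distinct child subtrees, plus possibly none at $v$, so they are edge-disjoint among themselves. This greedy leaves at most one vertex of $N$ unmatched overall, giving the $\lfloor n/2\rfloor$ pairs. It is the standard $T$-join pairing argument.

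The main obstacle is making sure that the greedy really leaves at most one vertex unmatched in total. This holds because at each vertex at most one item survives, so at the root at most one survives overall. It is also worth noting that the paths need only be endpoint-disjoint at their ends in $N$, which the pairing ensures. Combining the two bounds gives $l_{\max}(A_n)=\lfloor n/2\rfloor$.
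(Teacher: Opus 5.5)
Your proof is correct, and its skeleton is the same as the paper's. The upper bound is $l=p^-$ (Theorem~\ref{thm:apex_characterization}) together with the observation that edge-disjoint cycles use disjoint pairs of edges at $h$. The lower bound closes $\lfloor n/2\rfloor$ edge- and endpoint-disjoint paths of $T$ with ends in $N(h)$ through $h$ and puts one negative edge on each, which is the argument of Lemma~\ref{lemma:lmax_p}.

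The genuine difference is how you prove the path lemma. The paper's Lemma~\ref{lemma:apex_count_paths} suppresses degree-$2$ vertices outside $S$ and then inducts on $|S|$, deleting two vertices of $S$ next to a leaf. This relies on $S$ containing every leaf of $T$, which is supplied by the $2$-connectivity convention. That hypothesis also has to be restored after each deletion: in its Case~2 the vertex $w$ may become a leaf outside $S$ and must be discarded as well.

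Your bottom-up greedy on a rooted tree (the standard $T$-join pairing) is non-inductive. It gets edge-disjointness directly from the invariant that at most one unmatched vertex crosses each edge. It also works for an arbitrary $N\subseteq V(T)$, so it needs no assumption on the leaves at all.

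A small remark: only your upper bound actually uses Theorem~\ref{thm:apex_characterization}. The lower bound needs only the trivial inequality $p^-\le l$.
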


The bound is based on two lemmas. The following lemma is a generalisation of Lemma 1 in \cite{Wu}.

\begin{lemma}\label{lemma:apex_count_paths}
    For a tree $T$ and a vertex subset $S \subseteq V$ with $n$ vertices which contains all leaves of $T$, there are $\lfloor \frac{n}{2} \rfloor$ edge-disjoint and endpoint-disjoint paths whose endpoints are in $S$. 
\end{lemma}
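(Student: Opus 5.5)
We argue by induction on $|V(T)|$, in the spirit of the proof of Lemma 1 in \cite{Wu}. Given a tree $T$ and a set $S \subseteq V(T)$ with $|S| = n$ containing all leaves of $T$, we aim to produce $\lfloor n/2 \rfloor$ paths in $T$ that are pairwise edge-disjoint and have pairwise distinct endpoints, every endpoint lying in $S$. The base cases are immediate: a single vertex ($n \le 1$, nothing to find) and a single edge ($S$ contains both ends, and the edge itself is the one path needed).

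For the inductive step we reduce $T$ at a vertex of \emph{depth one}. Root $T$ at a leaf $r$ and choose a non-leaf vertex $v$ of maximum depth. All children of $v$ are then leaves, hence lie in $S$; call them $u_1,\dots,u_k$ with $k \ge 1$. We split into cases according to $k$ and according to whether $v \in S$.

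\textbf{Case A: $k \ge 2$.} Take the path $u_1 v u_2$. Delete $u_1$ and $u_2$, set $S' = S \setminus \{u_1,u_2\}$, and if $v$ has become a leaf (that is, $k = 2$) add $v$ to $S'$ when it is not already there. Then $|S'| \ge n-2$. By induction the reduced tree supplies $\lfloor |S'|/2 \rfloor$ paths, and if $|S'| > n-2$ we discard paths as needed so that exactly $\lfloor n/2 \rfloor - 1$ remain. The main obstacle is this: when $v$ was added to $S'$, some of these paths may end at $v$ or pass through $v$. They cannot reuse the edges $vu_1$ or $vu_2$, since those edges are gone. So the only danger is a shared endpoint, namely a path ending at $v$. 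To fix this, extend that path by the edge $vu_3$ when $k \ge 3$. When $k = 2$ there is no spare edge, and instead we re-route: replace the path ending at $v$, say $Q$, together with $u_1 v u_2$, by the two paths $Q u_1$ and $v u_2$.

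That re-routing must be checked carefully. The edges $vu_1$ and $vu_2$ are still unused by the other paths, and each of the endpoints $u_1$, $u_2$, and the far end of $Q$ is used only once. The endpoint $v$, however, is used only if $v \in S$; if $v \notin S$, it was added to $S'$ only artificially. A cleaner way to avoid this is to add $v$ to $S'$ \emph{only} when $v \in S$, at the cost that the reduced tree then has a leaf outside $S'$. Handling that is exactly why the statement should be strengthened during the induction.

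\textbf{Strengthened statement.} We will instead prove the lemma for any $S$ containing all leaves, allowing the count to involve leaves outside $S$ as follows. First prune every leaf not in $S$ (repeatedly), which keeps all paths between vertices of $S$ intact. Then apply induction with the simpler reduction:
\begin{itemize}
\item if $k \ge 2$, use the path $u_1 v u_2$, delete $u_1, u_2$, and set $S' = S\setminus\{u_1,u_2\}$, pruning again afterwards;
\item if $k = 1$ and $v \in S$, use the edge $u_1 v$ as a path and delete $u_1$ and $v$;
\item if $k = 1$ and $v \notin S$, contract the edge $vu_1$ into $u_1$, which preserves edge-disjointness after un-contraction.
\end{itemize}
In the second bullet, deleting $v$ may disconnect the tree. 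We treat the result as a forest and apply induction componentwise, using the fact that $\sum_i \lfloor a_i/2\rfloor$ may fall short of $\lfloor \sum_i a_i/2 \rfloor$. This shortfall is the real difficulty. To handle it, the induction hypothesis is better phrased for a tree with $S$ allowed to contain internal vertices, and we avoid deleting $v$: keep $v$ and only delete $u_1$, noting that $v$'s membership in $S$ is then consumed. We expect this bookkeeping, ensuring each removal step lowers $|S|$ by exactly two per path created while keeping the structure a single tree, to be the crux of the argument.
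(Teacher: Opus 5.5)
Your final three-rule reduction is a complete and correct proof, and it is essentially the paper's argument. The rules are: prune leaves outside $S$; for $k\ge 2$ use the path $u_1vu_2$ and delete $u_1,u_2$; for $k=1$ and $v\in S$ use the edge $u_1v$ and delete $u_1,v$; for $k=1$ and $v\notin S$ contract $vu_1$. The paper first contracts every chain of degree-$2$ vertices outside $S$ (your third rule). It then inducts on $|S|$ in one of two ways. Either it deletes a leaf together with its degree-$2$ neighbour, which then lies in $S$ (your second rule, a path of length $1$). Or it deletes two leaves attached to a common vertex of degree at least $3$ (your first rule, a path of length $2$).

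What is missing is only the realisation that you are already done, because the obstacle you call the crux does not occur. You rooted $T$ at a leaf and $v$ is a non-leaf, so $v$ has a parent. When $k=1$ this forces $d(v)=2$, so $T-\{u_1,v\}$ is again a tree. There is no forest, no componentwise induction and no $\sum_i\lfloor a_i/2\rfloor$ shortfall. Your alternative of keeping $v$ while removing it from $S$ just creates a leaf outside $S$ that the pruning rule deletes, so it is the same operation. No strengthened statement is needed either, beyond the remark that pruning leaves outside $S$ preserves every path between vertices of $S$.

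The first version of Case~A should simply be deleted. If $v\in S$, no re-routing is needed: a path of the smaller tree ending at $v$ does not clash with $u_1vu_2$, whose endpoints are $u_1,u_2$. If $v\notin S$, the re-routed path $vu_2$ has an endpoint outside $S$, so it is invalid.

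Two features of your version are actually more careful than the paper's. First, rooting at a leaf and taking a deepest non-leaf guarantees that one of the cases applies; the paper does not say which leaf to use or why one of its two cases must occur. Second, the pruning step covers a situation in the paper's Case~2. There the branch vertex may have degree $3$ and lie outside $S$, so after the two leaves are deleted it becomes a leaf outside $S$, and the induction hypothesis does not literally apply.
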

\begin{proof}
    
    For a maximal path $(v_0, v_1, \cdots, v_k)$ such that $d(v_i) = 2$ and $v_i \notin S$ for $i = 1, 2, \cdots, k-1$, we replace this path with a single edge $(v_0, v_k)$. 
    We can do this constantly until it is never possible and call this new tree $T'$. It is not hard to see that $T'$ has the same number of edge-disjoint and endpoint-disjoint paths whose endpoints are in $S$. Additionally, in $T'$, every vertex with degree $2$ is in $S$. 
    
    We prove this theorem by induction. It is trivial when $|S| \le 3$. Assume that this theorem holds for $|S| \le n-1$. There are two cases for a leaf $v \in S$ in the tree $T'$.

    \begin{case}\label{case:tree1}
            There is a path $(u, w, v)$, where $d(w)=2$ and $d(u)\ge 2$.
        \end{case}
        
        In this case, $T'-\{w, v\}$ is a tree on $(n-2)$ vertices, where we can find at least $\lfloor\frac{n-2}{2}\rfloor$ edge-disjoint and endpoint-disjoint paths whose endpoints are in $S$. Additionally, path $(w,v)$ is edge-disjoint and endpoint-disjoint with all those $\lfloor\frac{n-2}{2}\rfloor$ paths.
        
        \begin{case}\label{case:tree2}
            Case~\ref{case:tree1} does not hold, but there is a path $(u, w,v)$, where $d(w)\ge 3$ and $d(u)=1$. 
        \end{case}
        
        In this case, $T'-\{u, v\}$ is a tree on $(n-2)$ vertices, where we can find at least $\lfloor\frac{n-2}{2}\rfloor$ edge-disjoint and endpoint-disjoint paths. Additionally, path $(u, w, v)$ is edge-disjoint and endpoint-disjoint with all those $\lfloor\frac{n-2}{2}\rfloor$ paths.
        
        In both cases above, we can find at least $\lfloor\frac{n}{2}\rfloor$ edge-disjoint and endpoint-disjoint paths whose endpoints are in $S$. Additionally, since $|S| = n$, we can find at most $\lfloor\frac{n}{2}\rfloor$ edge-disjoint and endpoint-disjoint paths whose endpoints are in $S$. Therefore, there are exactly $\lfloor \frac{n}{2} \rfloor$ edge-disjoint and endpoint-disjoint paths whose endpoints are in $S$.
\end{proof}

\begin{lemma} \label{lemma:lmax_p}
Let $G$ be an unsigned graph such that $l( G, \sigma ) = p^-( G, \sigma )$ for all signatures $\sigma$. Then $l_{\max}( G ) = p( G )$.
\end{lemma}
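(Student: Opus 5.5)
We prove the two inequalities $l_{\max}(G) \le p(G)$ and $l_{\max}(G) \ge p(G)$ separately. Both use the hypothesis $l(G,\sigma) = p^-(G,\sigma)$ for every signature $\sigma$.

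\emph{Upper bound.} Fix any signature $\sigma$. A family of edge-disjoint negative cycles in $(G,\sigma)$ is in particular a family of edge-disjoint cycles in $G$, so $p^-(G,\sigma) \le p(G)$. By the hypothesis, $l(G,\sigma) = p^-(G,\sigma) \le p(G)$. Taking the maximum over all $\sigma$ gives $l_{\max}(G) \le p(G)$.

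\emph{Lower bound.} Fix a maximum family $C_1, \dots, C_k$ of edge-disjoint cycles in $G$, where $k = p(G)$. We build a signature in which every $C_i$ is negative:
\begin{itemize}
    \item choose one edge $e_i \in C_i$ for each $i$; these edges are distinct because the cycles are edge-disjoint;
    \item set $\sigma(e_i) = -1$ for every $i$, and $\sigma(e) = +1$ for every other edge.
\end{itemize}
Each $C_i$ contains exactly one negative edge, namely $e_i$, since every $e_j$ lies only in $C_j$. Hence every $C_i$ is negative, and the $C_i$ form $k$ edge-disjoint negative cycles in $(G,\sigma)$. Therefore $p^-(G,\sigma) \ge p(G)$. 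By the lower bound in \eqref{eq:bounds}, or directly by the hypothesis,
\[
    l_{\max}(G) \ge l(G,\sigma) \ge p^-(G,\sigma) \ge p(G).
\]
Combining the two inequalities yields $l_{\max}(G) = p(G)$.

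There is no real obstacle here; the only point needing care is the construction in the lower bound. We must make sure that placing the chosen negative edges does not accidentally make some $C_i$ positive. Picking exactly one edge per cycle and relying on edge-disjointness handles this.
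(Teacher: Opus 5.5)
Your proof is correct and follows the paper's argument. The upper bound combines $p^-(G,\sigma)\le p(G)$ with the hypothesis, and the lower bound comes from negating one edge in each cycle of a maximum edge-disjoint family; you also rightly observe that this lower bound needs only $p^-\le l$, not the hypothesis.
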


\begin{proof}
We only need to prove that $p( G ) = \max_\sigma p^-( G, \sigma )$. 
Firstly, we clearly have $p^-( G, \sigma ) \le p(G)$ for all $(G, \sigma)$. 
Conversely, there exists a signed graph $(G, \sigma)$ such that $p^-( G, \sigma ) = p( G )$: simply negate one edge from each edge-disjoint cycle. 
\end{proof}

\begin{proof}[Proof of Theorem \ref{thm:apex_max_frustration}]
For an apex tree $A_n$, every cycle must contain the vertex $h$ and a path in the tree $T$. Therefore, by Lemma \ref{lemma:apex_count_paths}, the maximum number of edge-disjoint cycles in an apex tree $A_n$ is
\[
    p( A_n ) = \left\lfloor \frac{n}{2} \right\rfloor.
\]
Lemma \ref{lemma:lmax_p} then yields $l_{\max}(A_n) = p(A_n) = \left\lfloor \frac{n}{2} \right\rfloor$.
\end{proof}

When $h$ is a universal vertex, $A_n(h, T) = K_1 \vee T$, which is chordal. By choosing $T$ to be a star, we obtain a family of chordal graphs where the negative frustration index is equal to $1$, while the maximum frustration index is unbounded. This can be viewed as a generalisation of the counterexample of Conjecture \ref{conjecture:chordal}, since $K_{3,1,1} = K_1 \vee S_4$.

\begin{cor} \label{corollary:star}
For the star $S_n$ on $n$ vertices, we have $l(-K_1 \vee S_n) = 1$ while $l_{\max}( K_1 \vee S_n ) = \lfloor \frac{n}{2} \rfloor$.
\end{cor}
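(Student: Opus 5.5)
The statement has two halves, and I would treat them separately. The second half, $l_{\max}(K_1 \vee S_n) = \lfloor \frac{n}{2} \rfloor$, is immediate from Theorem~\ref{thm:apex_max_frustration}. The graph $K_1 \vee S_n$ is an apex tree $A_n(h, S_n)$ in which $h$ is adjacent to all $n$ vertices of the star. In particular every leaf of the star lies in $N(h)$, and $d(h) = n$. So the real work is the claim $l(-K_1 \vee S_n) = 1$, and for this I would use Corollary~\ref{corollary:-An}.

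By Corollary~\ref{corollary:-An}, $l(-K_1 \vee S_n)$ equals the maximum size of a collection of edge-disjoint and endpoint-disjoint odd paths in $S_n$. The star $S_n$ has centre $c$ and leaves $v_1, \dots, v_{n-1}$. Every path in $S_n$ is one of two kinds:
\begin{itemize}
    \item a single edge $c v_i$, which has length $1$ and so is odd;
    \item a path $v_i c v_j$, which has length $2$ and so is even.
\end{itemize}
Hence the odd paths are exactly the edges $cv_i$. Any two of them share the endpoint $c$, so they are never endpoint-disjoint. For $n \ge 2$ at least one such edge exists, so the maximum collection has size exactly $1$, giving $l(-K_1\vee S_n) = 1$.

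As a sanity check independent of the corollary, I would verify this directly via max-cut. Take $X = \{c\}$. The edges crossing the cut are the $n-1$ star edges $cv_i$ and the $n-1$ edges $hv_i$, for $2(n-1)$ in total. Every edge except $hc$ is cut. Since $h, c, v_1$ form a triangle, no cut can contain all the edges, so the complement of the maximum cut has size exactly $1$.

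There is no real obstacle here. The only point needing care is the degenerate case: one should note $n \ge 2$, so that the star has an edge and the graph contains a triangle.
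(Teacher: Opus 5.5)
Your proposal is correct and follows the same route as the paper: $l_{\max}(K_1\vee S_n)=\lfloor n/2\rfloor$ is read off from Theorem~\ref{thm:apex_max_frustration}, and $l(-K_1\vee S_n)=1$ comes from Corollary~\ref{corollary:-An}, since the odd paths in a star are exactly its edges and any two of them share the centre. One slip in your optional max-cut check: the cut missing only $hc$ is $X=\{h,c\}$ (equivalently $X=\{v_1,\dots,v_{n-1}\}$), whereas $X=\{c\}$ cuts only the $n$ edges at $c$.
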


We further illustrate our results by taking the example of the double star. 

\begin{exa} \label{example:double-star}
Let $n \ge 2$. The double star $S_{n,n}$ is the following tree on $2n+2$ vertices: $S_{n,n} = (V, E)$ with $V = \{a, b \} \cup \{c_1, \dots, c_n\} \cup \{ d_1, \dots, d_n \}$ and $E = \{ ac_1, \dots, ac_n \} \cup \{ab\} \cup \{ bd_1, \dots, bd_n \}$. We see that $\mu( S_{n,n} ) = 2$ while $\{(c_1, a, b, d_1), (c_2,a), (d_2,b) \}$ is a family of three edge- and enpoint-disjoint odd paths in $S_{n,n}$. Let $G = K_1 \vee S_{n,n}$. We obtain
\begin{alignat*}{3}
    p_\triangle( G ) &= p^-_\triangle( -G ) &&= 2, \\
    l( -G ) &= p^-( -G ) &&=3, \\
    l_{\max}( G ) &= p( G ) &&= n+1.
\end{alignat*}
\end{exa}

\medskip

We now study the signed graphs that maximise the frustration index in more detail. 
We begin with an important bound on the frustration index, which will be useful to us here and in Section \ref{section:complete_split_graphs} as well.

For a signed graph $\Sigma = (G, \sigma)$, and a vertex subset $S$, $\Sigma[S]$ denotes the induced signed subgraph by $S$, and $G[S]$ denotes the induced subgraph by $S$.

\begin{theorem}\label{thm:disjoint_vertex_subset}
    For a signed graph $\Sigma$, and the partition of its vertex set $V(\Sigma) = S \cup T$, we have
    \[
        l(\Sigma) \le l(\Sigma[S]) + l(\Sigma[T]) + \left\lfloor \frac{d(S)}{2} \right \rfloor.
    \]
\end{theorem}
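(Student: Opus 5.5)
We will use the characterisation $l(\Sigma) = \min\{ |E^-(\Sigma^X)| : X \subseteq V \}$. The idea is to pick good switchings of $\Sigma[S]$ and $\Sigma[T]$ separately, and then use one extra global switching to control the cut edges $\partial S$.

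First choose $X_S \subseteq S$ such that $\Sigma[S]^{X_S}$ is minimum, so it has exactly $l(\Sigma[S])$ negative edges. Similarly choose $X_T \subseteq T$ for $\Sigma[T]$. Now switch $\Sigma$ at $X_S \cup X_T$. An edge with both endpoints in $S$ is in $\partial(X_S \cup X_T)$ if and only if it is in $\partial_{G[S]} X_S$, since $X_T \cap S = \emptyset$. The same holds inside $T$. Hence the resulting signed graph $\Sigma'$ has exactly $l(\Sigma[S])$ negative edges inside $S$ and exactly $l(\Sigma[T])$ negative edges inside $T$. The edges of $\partial S$ receive some signs; say $d^-_{\Sigma'}(S) = k$ and $d^+_{\Sigma'}(S) = d(S) - k$.

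Next, optionally switch $\Sigma'$ at $S$ itself. This flips exactly the signs of the edges in $\partial S$ and leaves every edge inside $S$ or inside $T$ unchanged. Choose whichever of $\Sigma'$ and $\Sigma'^S$ has fewer negative edges across the cut. The number of negative cut edges is then $\min(k, d(S)-k) \le \lfloor d(S)/2 \rfloor$.

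The final switching of $\Sigma$ therefore has at most $l(\Sigma[S]) + l(\Sigma[T]) + \lfloor d(S)/2 \rfloor$ negative edges. Since the frustration index is the minimum number of negative edges over all switchings, the stated bound follows.

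The only point needing care is checking that switching at $X_S \cup X_T$ restricts on each side to the intended switching. This holds because switching is determined edge by edge by membership in the cut, and $X_S$ and $X_T$ live in disjoint parts.
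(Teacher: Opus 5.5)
Your proof is correct and follows the paper's own argument: switch $\Sigma$ so that $\Sigma[S]$ and $\Sigma[T]$ are both minimum, then switch at $S$ if needed so that at most half the cut edges are negative. You also spell out why switching at $X_S \cup X_T$ restricts correctly to each side, and your bound $\min(k, d(S)-k) \le \lfloor d(S)/2 \rfloor$ is stated more accurately than the paper's strict-inequality phrasing, which fails when $d(S)$ is odd.
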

\begin{proof}
    Since $S$ and $T$ are disjoint, $\Sigma$ can be switched so that $\Sigma[S]$ and $\Sigma[T]$ are minimum. If $d^-(S) >  \left\lfloor \frac{d(S)}{2} \right \rfloor$, switching $S$ leads to $d^-(S) <  \left\lfloor \frac{d(S)}{2} \right \rfloor$. Therefore, $\Sigma$ can always be switched so that
    \[
        | E^-(\Sigma) | \le l(\Sigma[S]) + l(\Sigma[T]) + \left\lfloor \frac{d(S)}{2} \right \rfloor.
    \]
\end{proof}

\begin{cor}\label{cor:disjoint_vertex_subset}
    For a graph $G$, and the partition of its vertex set $V(G) = S \cup T$, we have
    \[
        l_{\max}(G) \le l_{\max}(G[S]) + l_{\max}(G[T]) + \left\lfloor \frac{d(S)}{2} \right \rfloor.
    \]
\end{cor}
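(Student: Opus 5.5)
The corollary follows from Theorem~\ref{thm:disjoint_vertex_subset} by taking a maximum over signatures. Fix an arbitrary signature $\sigma$ of $G$ and write $\Sigma = (G,\sigma)$. Apply Theorem~\ref{thm:disjoint_vertex_subset} to the partition $V(G) = S \cup T$:
\[
    l(\Sigma) \le l(\Sigma[S]) + l(\Sigma[T]) + \left\lfloor \frac{d(S)}{2} \right\rfloor .
\]

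Next, bound each term on the right independently of $\sigma$. The induced signed graph $\Sigma[S]$ is $(G[S], \sigma|_{E(G[S])})$, i.e. a signature of the unsigned graph $G[S]$. By the definition of the maximum frustration index, $l(\Sigma[S]) \le l_{\max}(G[S])$, and likewise $l(\Sigma[T]) \le l_{\max}(G[T])$. The cut size $d(S)$ counts edges of $G$ with exactly one endpoint in $S$. It depends only on the underlying graph, not on $\sigma$, so the last term needs no bounding.

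Combining these gives, for every $\sigma$,
\[
    l(G,\sigma) \le l_{\max}(G[S]) + l_{\max}(G[T]) + \left\lfloor \frac{d(S)}{2} \right\rfloor .
\]
The right-hand side does not depend on $\sigma$, so taking the maximum over all $\sigma$ on the left yields the claimed bound on $l_{\max}(G)$. No step is a real obstacle; the argument is only the observation that restricting a signature to an induced subgraph gives a signature of that subgraph.
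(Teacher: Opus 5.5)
Your proposal is correct and matches the paper's approach: the paper states this as an immediate consequence of Theorem~\ref{thm:disjoint_vertex_subset} with no further proof, and the intended argument is exactly yours. You apply the theorem to an arbitrary signature, bound $l(\Sigma[S])$ and $l(\Sigma[T])$ by $l_{\max}(G[S])$ and $l_{\max}(G[T])$ (the cut term $d(S)$ does not depend on $\sigma$), and then maximise over $\sigma$.
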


The following is a direct corollary from Theorem~\ref{thm:disjoint_vertex_subset}.
\begin{prop}\label{prop:-h-}
    For the apex tree $A_n = A_n(h, T)$, any switching class that maximises the frustration index has a minimum signature such that all negative edges are incident to vertex $h$, and $d^-(h) = \left\lfloor \frac{n}{2} \right\rfloor$. 
\end{prop}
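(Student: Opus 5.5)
Fix a switching class of $A_n = A_n(h,T)$ whose frustration index equals $l_{\max}(A_n) = \lfloor n/2 \rfloor$, by Theorem~\ref{thm:apex_max_frustration}. The plan is to use Theorem~\ref{thm:spanningtree} to move all negative edges to the edges at $h$, and then to pin down $d^-(h)$ using Theorem~\ref{thm:disjoint_vertex_subset} with the partition $S = \{h\}$, $V \setminus \{h\} = V(T)$.

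First, choose the spanning tree $T$ itself. It is a spanning tree of $A_n - h$, so we extend it by any one edge $hv$ to obtain a spanning tree $T^\ast$ of $A_n$. By Theorem~\ref{thm:spanningtree} we may switch so that every edge of $T^\ast$ is positive. Now every negative edge is incident to $h$. Call this signature $\sigma$.

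Next we make $\sigma$ minimum without creating negative edges away from $h$. Since $\Sigma[V(T)] = +T$ is balanced, the only freedom that keeps $T$ all-positive is switching $\{h\}$ (equivalently switching $V(T)$). That operation negates exactly the edges at $h$. If $d^-(h) > \lfloor n/2 \rfloor$, switching $\{h\}$ gives $d^-(h) \le \lceil n/2\rceil - 1 \le \lfloor n/2 \rfloor$. So after possibly switching $h$, the signature has all negative edges at $h$ and
\[
    |E^-| = d^-(h) \le \left\lfloor \frac{n}{2} \right\rfloor .
\]
This is exactly the content of Theorem~\ref{thm:disjoint_vertex_subset} for this partition, because $l(\Sigma[\{h\}]) = l(+T) = 0$ and $d(\{h\}) = n$.

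It remains to show the number of negative edges equals $\lfloor n/2\rfloor$ and that the signature is minimum. We have $l(\Sigma) \le |E^-(\Sigma)| = d^-(h) \le \lfloor n/2 \rfloor = l(\Sigma)$, since the class is maximising. All these are therefore equalities. Hence $|E^-| = l(\Sigma)$, so the signature is minimum, and $d^-(h) = \lfloor n/2 \rfloor$.

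The only mildly delicate step is the middle one. We must check that switching $h$ preserves the property that all negative edges are incident to $h$, and that it genuinely gives at most $\lfloor n/2\rfloor$ negative edges when $n$ is odd. Both are immediate, since switching $\{h\}$ only touches edges at $h$, and the bound $\lfloor n/2\rfloor$ is exactly the bound coming from Theorem~\ref{thm:disjoint_vertex_subset}.
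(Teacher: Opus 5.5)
Your argument is correct and is essentially the paper's. The paper presents this proposition as a direct corollary of Theorem~\ref{thm:disjoint_vertex_subset} applied to the partition $\{h\} \cup V(T)$ (where the tree contributes frustration $0$), combined with $l_{\max}(A_n)=\lfloor n/2\rfloor$ from Theorem~\ref{thm:apex_max_frustration}; you simply unfold that theorem's switching argument explicitly (make $T$ all-positive, then switch $\{h\}$ if $d^-(h)>\lfloor n/2\rfloor$) and close with $l(\Sigma)\le d^-(h)\le\lfloor n/2\rfloor=l(\Sigma)$.
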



There are two trivial cases of apex trees: $n = 1$, in which chase $A_n = K_2$ and $n=2$, in which case $A_n = K_3$. Even though the negative signature can maximise the frustration index of an apex tree $A_n(h,T)$, remarkably it cannot do so uniquely (unless we are in one the two trivial cases above).

\begin{theorem} \label{theorem:not_unique-max_apex_tree}
For any apex tree $A_n$ where $n \ge 3$,  the all-negative signature does not maximise the frustration index uniquely.
\end{theorem}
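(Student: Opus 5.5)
The approach is to show that whenever $-A_n$ attains $l_{\max}(A_n)=\lfloor n/2\rfloor$, there are at least two distinct switching classes attaining this value. Then the all-negative class cannot be the unique maximiser. If $-A_n$ does not attain the maximum at all, there is nothing to prove. So it suffices to show that, for $n\ge 3$, at least two switching classes of $A_n$ have frustration index $\lfloor n/2\rfloor$.

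\textbf{Construction of maximising signatures.} Apply Lemma~\ref{lemma:apex_count_paths} with $S=N(h)$, which contains all leaves of $T$ and has $|S|=n$. This gives $k=\lfloor n/2\rfloor$ edge- and endpoint-disjoint paths $Q_1,\dots,Q_k$ in $T$ with endpoints $u_i,v_i\in N(h)$. Let $R$ be the set of at most one remaining vertex of $N(h)$ not used as an endpoint. For any set $N^-\subseteq N(h)$ containing exactly one endpoint of each $Q_i$, and containing an arbitrary subset of $R$, define $\sigma_{N^-}$ as follows:
\begin{itemize}
\item every edge of $T$ is positive;
\item the edge $hx$ is negative if and only if $x\in N^-$.
\end{itemize}
Every cycle of $A_n$ has the form $h$, then a path of $T$ between two vertices $x,y\in N(h)$, then back to $h$. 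Such a cycle is negative exactly when precisely one of $x,y$ lies in $N^-$. Hence the cycles $h\,Q_i\,h$ are $k$ edge-disjoint negative cycles, so $p^-\ge k$. By \eqref{eq:bounds} and Theorem~\ref{thm:apex_max_frustration},
\[
k\le l(A_n,\sigma_{N^-})\le l_{\max}(A_n)=k .
\]
Therefore every such $\sigma_{N^-}$ maximises the frustration index.

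\textbf{Distinguishing switching classes.} Take as spanning tree $T$ together with one edge $hx_0$. The chords are the edges $hx$ with $x\in N(h)\setminus\{x_0\}$. The fundamental cycle of the chord $hx$ is negative iff exactly one of $x,x_0$ lies in $N^-$. By the fundamental-cycle criterion from the preliminaries, $\sigma_{N^-}$ and $\sigma_{M^-}$ are switching equivalent iff $M^-=N^-$ or $M^-=N(h)\setminus N^-$. Two admissible choices can therefore be exhibited that are neither equal nor complementary:
\begin{itemize}
\item If $k\ge 2$ (that is, $n\ge 4$), swap the chosen endpoint of $Q_1$ only. The resulting set differs from $N^-$ on $Q_1$ but agrees with it on $Q_2$, so it is neither $N^-$ nor its complement.
\item If $n=3$, then $k=1$ and $R=\{c\}$. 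The sets $\{u_1\}$ and $\{u_1,c\}$ are admissible, and neither equals the other or its complement $\{v_1\}$ (respectively $\{v_1,c\}$).
\end{itemize}
In either case we obtain two non-equivalent maximising switching classes. At most one of them is the class of $-A_n$, so if $-A_n$ maximises the frustration index, it does not do so uniquely.

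The only step requiring care is the switching-equivalence test. It relies on the fundamental-cycle characterisation and on the fact that every cycle passes through $h$, so that the signs of all cycles are determined by the partition $N^-$ versus $N(h)\setminus N^-$. I expect this to be routine rather than a real obstacle.
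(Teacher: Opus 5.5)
Your proof is correct, and it takes a different route from the paper's.

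The paper works from $-A_n$ itself. Assuming $l(-A_n)=\lfloor n/2\rfloor$, it uses Theorem~\ref{thm:apex_characterization} to take a packing $\mathcal{C}$ of $\lfloor n/2\rfloor$ edge-disjoint odd cycles in $-A_n$. It then makes two edges of a single cycle of $\mathcal{C}$ positive: either $ha$ and $ab$ for a non-leaf endpoint $a$, or $ha$ and $hb$ for two paired leaves. The packing $\mathcal{C}$ stays negative, so the frustration index stays maximal, while some other cycle through $h$ changes sign. This needs a case analysis on how many non-leaves of $T$ lie in $N(h)$ and whether they can serve as endpoints.

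You never look at $-A_n$. You build the explicit family $\sigma_{N^-}$ from the path packing of Lemma~\ref{lemma:apex_count_paths}. With the spanning tree $T+hx_0$ and the fundamental-cycle criterion, you show that $\sigma_{N^-}$ and $\sigma_{M^-}$ are switching equivalent if and only if $M^-\in\{N^-,\,N(h)\setminus N^-\}$. This buys three things:
\begin{itemize}
\item It is uniform and avoids the case analysis entirely.
\item It proves more: for every $n\ge 3$, $A_n$ has at least $2^{\lceil n/2\rceil-1}$ maximising switching classes, whether or not $-A_n$ is one of them. This comes from counting admissible sets modulo complementation.
\item It only needs the upper bound $l(A_n,\sigma)\le\lfloor n/2\rfloor$. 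That bound follows directly by switching all of $T$ positive and then switching $\{h\}$ if $d^-(h)>\lfloor n/2\rfloor$. So your argument does not really depend on the Menger-based Theorem~\ref{thm:apex_characterization}, even though you cite Theorem~\ref{thm:apex_max_frustration}.
\end{itemize}

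What the paper's route gives in exchange is a competing maximiser that differs from $-A_n$ in only two edges.
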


\begin{proof}
    The proof is trivial if $l( -A_n ) < \lfloor \frac{n}{2} \rfloor$. Suppose $l( -A_n ) = \lfloor \frac{n}{2} \rfloor$. According to Theorem~\ref{thm:apex_characterization}, there is a family $\mathcal{C}$ of exactly $\lfloor \frac{n}{2} \rfloor$ edge-disjoint negative cycles in $-A_n$, or in other words a family $\mathcal{P}$ of exactly $\lfloor \frac{n}{2} \rfloor$ edge- and endpoint-disjoint odd paths in $T$, whose endpoints are in $N(h)$. $\mathcal{C}$ and $\mathcal{P}$ might not be unique. There are three cases.
    \begin{case}\label{case:2_nonleaves}
        There are at least two non-leaves of $T$ in $N(h)$.
    \end{case}
    Since at least $n-1$ vertices in $N(h)$ are endpoints of some odd path in $\mathcal{P}$ and there are at least two non-leaves in $N(h)$, one of them must be an endpoint. Let $a$ be this non-leaf endpoint, $b$ its neighbour on the odd path, and $c$ another neighbour of $a$ in $T$. The edge $ha$ must be in at least two cycles. Let the odd cycle containing $ab$ be $C_1$, and a cycle containing edge $ha$ but not edge $ab$ be $C_2$.

    Consider the signed graph $\Sigma = ( A_n, \sigma )$, where all edges are negative apart from $ha$ and $ab$. Then $\mathcal{C}$ is a family of exactly $\lfloor \frac{n}{2} \rfloor$ edge-disjoint negative cycles in $\Sigma$ (as the sign of the cycle $C_1$ remains negative), and hence $l( \Sigma ) = \lfloor \frac{n}{2} \rfloor$. Moreover, since the cycle $C_2$ in $\Sigma$ contains exactly one positive edge, while the cycle $C_2$ in $-A_n$ is all-negative, $\Sigma$ is not switching equivalent to $-A_n$. 

    Note that this includes the case when $T$ is a path with $n \ge 4$.

    \begin{case}\label{case:1_nonleaf_endpoint}
        There is only one non-leaf of $T$ in $N(h)$, but it is an endpoint of some $\mathcal{P}$.
    \end{case}
    This case is similar to Case~\ref{case:2_nonleaves}. Note that this includes the case when $T$ is a path with $n = 3$.

    \begin{case}\label{case:1_nonleaf}
        There is only one non-leaf of $T$ in $N(h)$, which can never be an endpoint of a path in any possible $\mathcal{P}$.
    \end{case}
    Based on Case~\ref{case:2_nonleaves} and Case~\ref{case:1_nonleaf_endpoint}, $T$ has at least three leaves. Additionally, since the only non-leaf cannot be an endpoint,  all leaves are endpoints in any family of exactly $\lfloor \frac{n}{2} \rfloor$ edge- and endpoint-disjoint odd paths in $T$. Therefore, there are actually at least four leaves $a$, $b$, $c$ and $d$, all of which are endpoints in $\mathcal{P}$. Suppose that $a$ and $b$ are endpoints of the same odd path corresponding to the negative cycle set $\mathcal{C}$, and let the negative cycle containing $a$ and $b$ be $C_1$. Consider the signed graph $\Sigma = ( A_n, \sigma )$ where all edges are negative apart from $ha$ and $hb$. Then $\mathcal{C}$ is a family of exactly $\lfloor \frac{n}{2} \rfloor$ edge-disjoint negative cycles in $\Sigma$ (as the sign of the cycle $C_1$ remains negative), and hence $l( \Sigma ) = \lfloor \frac{n}{2} \rfloor$. Moreover, since the cycle containing $ha$ and $hc$ in $\Sigma$ contains exactly one positive edge, while this cycle in $-A_n$ is all-negative, $\Sigma$ is not switching equivalent to $-A_n$.

    \begin{case}\label{case:0_nonleaf}
        All vertices in $N(h)$ are leaves of $T$.
    \end{case}
    Based on Case~\ref{case:2_nonleaves} and Case~\ref{case:1_nonleaf_endpoint}, $T$ has at least three leaves. Similarly with Case~\ref{case:1_nonleaf}, picking two leaves $a$ and $b$, which are endpoints of the same odd path in $\mathcal{P}$ and negating the edges $ha$ and $hb$ leads to new signature maximises the frustration index, which is not switching equivalent to $-A_n$.
\end{proof}





\subsection{Fan Graphs} \label{section:fan}

We now focus on fan graphs, where $F_n = K_1 \vee P_n$, which is an apex tree.
In particular, we will be able to fully characterise and count the number of switching classes that maximise the frustration index of $F_n$.

From Theorem \ref{thm:apex_max_frustration}, we have 
$l_{\max}( F_n ) = \left\lfloor \frac{n}{2} \right\rfloor$. Moreover, since $P_n$ has a matching of size $\lfloor \frac{n}{2} \rfloor$, Theorem \ref{thm:apex_characterization} yields
\[
    l( -F_n ) =  l_{\max}( F_n ) = \left\lfloor \frac{n}{2} \right\rfloor.
\]
Thus, Theorem \ref{theorem:not_unique-max_apex_tree} shows that, unless $n \in \{1,2\}$, there is an antibalanced signature of $F_n$ that maximises the frustration index.

We already know that the frustration index of $(F_n, \sigma)$ is equal to the maximum number of disjoint negative cycles in $(F_n, \sigma)$. We now refine this result.


\begin{theorem}\label{thm:fan_disjoint_triangles}
    For the fan graph $F_n$, we have $l(F_n, \sigma) = p^-_{\triangle}(F_n, \sigma)$. 
\end{theorem}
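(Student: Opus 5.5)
The plan is to reduce the statement to a one-dimensional counting problem along the path. Write $P_n = v_1 v_2 \cdots v_n$ and let $h$ be the apex. For $1 \le i \le n-1$, set $t_i = h v_i v_{i+1}$; these are the $n-1$ triangles of $F_n$. By Theorem~\ref{thm:spanningtree}, applied to the spanning star at $h$, we may switch $\sigma$ so that every spoke $hv_i$ is positive. Neither $l$ nor $p^-_\triangle$ changes under switching. After this switch, $t_i$ is negative if and only if the path edge $v_iv_{i+1}$ is negative. Since $p^-_\triangle \le l$ by \eqref{equation:p_v_l}, it only remains to prove $l(F_n,\sigma) \le p^-_\triangle(F_n,\sigma)$.

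First I compute $p^-_\triangle$. Two triangles $t_i, t_j$ share an edge exactly when $|i-j| = 1$, and the shared edge is the spoke $hv_{i+1}$. So $p^-_\triangle$ is the maximum size of a set of indices of negative triangles containing no two consecutive indices. Group the negative triangles into maximal runs $t_i, t_{i+1}, \dots, t_{i+r-1}$ of consecutive negative triangles. Distinct runs are separated by at least one positive triangle, so they are edge-disjoint from each other. Within one run we can take at most $\lceil r/2 \rceil$ triangles, and alternate triangles achieve this. Hence $p^-_\triangle(F_n,\sigma) = \sum \lceil r/2 \rceil$ over all maximal runs.

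Next I use that $F_n$ is a planar near-triangulation whose bounded faces are exactly the $t_i$, so the triangles form a cycle basis. Concretely, any cycle of $F_n$ either avoids $h$, which is impossible since $P_n$ is a path, or is of the form $h v_i v_{i+1} \cdots v_j h$. Its edge set is the symmetric difference of $t_i, \dots, t_{j-1}$, so its sign is the product of their signs. It follows that a set $D$ of edges whose negation makes every $t_i$ positive makes $(F_n,\sigma)$ balanced, giving $l \le |D|$.

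Finally I construct such a $D$ run by run, using $\lceil r/2 \rceil$ edges per run. Negating the spoke $hv_{k+1}$ flips exactly the two triangles $t_k$ and $t_{k+1}$. For a run $t_i, \dots, t_{i+r-1}$ with $r$ even, negate the spokes $hv_{i+1}, hv_{i+3}, \dots, hv_{i+r-1}$; these fix the pairs $(t_i,t_{i+1}), (t_{i+2},t_{i+3}), \dots$ and no triangle outside the run is touched. For $r$ odd, do the same for the first $r-1$ triangles and negate the path edge $v_{i+r-1}v_{i+r}$, which lies only in $t_{i+r-1}$. Summing over runs gives $l \le \sum \lceil r/2\rceil = p^-_\triangle$, which completes the proof.

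The only point that needs care is the cycle-basis claim, namely that the signs of all cycles are determined by the triangle signs. This follows from the explicit description of the cycles of $F_n$ above, so I do not expect a real obstacle.
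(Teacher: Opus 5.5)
Your proof is correct and follows essentially the same route as the paper. After making all spokes positive, the paper switches the alternate interior vertices $v_{i+1}, v_{i+3},\dots$ of each maximal negative run, and the resulting negative edge set is exactly your set $D$ (alternate spokes, plus the last path edge when the run has odd length), with each such edge lying in its own negative triangle. The one difference is that you negate $D$ and certify balance through the triangle cycle-basis argument, whereas the paper's switching makes $D$ the actual set of negative edges, so $l \le |E^-| \le p^-_\triangle$ follows without any cycle-space reasoning.
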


\begin{proof}
    Let the only vertex in $K_1$ be $h$, and label the vertices in $P_n$ by $\{v_0, v_1, \cdots, v_{n-1}\}$. According to Theorem~\ref{thm:spanningtree}, we assume that $(F_n,\sigma)$ is switched so that all its negative edges are in $P_n$ without loss of generality.
    
    \begin{case}\label{case:fan_disjoint_1}
        There are no incident negative edges in $P_n$.
    \end{case}
    
    In this case, we have exactly $|E^-(F_n, \sigma)|$ edge-disjoint negative triangles, each with one negative edge. According to Equation~\ref{eq:bounds}, we have 
    \[
        l(F_n, \sigma) = |E^-(F_n, \sigma)| = p^-(F_n, \sigma) = p^-_{\triangle}(F_n, \sigma). 
    \]
    
    \begin{case}\label{case:fan_disjoint_2}
        There exist incident negative edges in $P_n$.
    \end{case}
    
    Let $(v_i, v_{i+1}, \cdots, v_{i+k-1})$ be a maximal all-negative path in $P_n$. 

    When $k$ is odd, switching the vertex subset $\{v_{i+1}, v_{i+3}, \cdots, v_{i+k-2}\}$ makes the path $(v_i, v_{i+1}, \cdots, v_{i+k-1})$ all-positive, and the edge subset $\{(v_{i+1}, h), (v_{i+3}, h), \cdots , (v_{i+k-2}, h)\}$ all-negative. Therefore, we get $\frac{k-1}{2}$ edge-disjoint triangles, each with one negative edge:
    \[
        (h, v_i, v_{i+1}, h), (h, v_{i+2}, v_{i+3}, h), \cdots, (h, v_{i+k-3}, v_{i+k-2}, h).
    \]

    When $k$ is even, switching $\{v_{i+1}, v_{i+3}, \cdots, v_{i+k-3}\}$ makes the edges in $(v_i, v_{i+1}, \cdots, v_{i+k-1})$ positive except the edge $(v_{i+k-2}, v_{i+k-1})$, and the edge subset $\{(v_{i+1}, h), (v_{i+3}, h), \cdots , (v_{i+k-3}, h)\}$ all-negative. Therefore, we get $\frac{k}{2}$ edge-disjoint triangles, each with one negative edge:
    \[
        (h, v_i, v_{i+1}, h), (h, v_{i+2}, v_{i+3}, h), \cdots, (h, v_{i+k-2}, v_{i+k-1}, h).
    \]

    After a series of such switching, we obtain a minimum $(F_n, \sigma)$ with exactly $|E^-(F_n, \sigma)| $ edge-disjoint negative triangles, each with one negative edge. Therefore, we have $l(F_n, \sigma) = |E^-(F_n, \sigma)| = p^-(F_n, \sigma) = p^-_{\triangle}(F_n, \sigma)$. 
\end{proof}

 \begin{cor}\label{cor:fan_negative_frustration}
     For a fan graph $F_n$, $l(F_n, \sigma) = \lfloor\frac{n}{2}\rfloor$ if and only if $(F_n, \sigma)$ contains exactly $\lfloor\frac{n}{2}\rfloor$ edge-disjoint negative triangles. 
 \end{cor}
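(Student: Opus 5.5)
This follows directly from Theorem~\ref{thm:fan_disjoint_triangles}, which gives $l(F_n,\sigma) = p^-_\triangle(F_n,\sigma)$ for every signature $\sigma$. We also need the value $l_{\max}(F_n) = \lfloor \frac{n}{2} \rfloor$ from Theorem~\ref{thm:apex_max_frustration}. The one thing to settle is how to read ``contains exactly $\lfloor\frac{n}{2}\rfloor$ edge-disjoint negative triangles''. I would interpret it as saying that the maximum number of edge-disjoint negative triangles equals $\lfloor\frac{n}{2}\rfloor$, that is, $p^-_\triangle(F_n,\sigma) = \lfloor\frac{n}{2}\rfloor$. Under this reading the equivalence is just Theorem~\ref{thm:fan_disjoint_triangles} rewritten.

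It is cleaner to also prove the statement under the weaker reading: some family of $\lfloor\frac{n}{2}\rfloor$ edge-disjoint negative triangles exists. Then the two directions go as follows.

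\emph{Forward direction.} Suppose $l(F_n,\sigma) = \lfloor\frac{n}{2}\rfloor$. Theorem~\ref{thm:fan_disjoint_triangles} gives $p^-_\triangle(F_n,\sigma) = \lfloor\frac{n}{2}\rfloor$, so a family of $\lfloor\frac{n}{2}\rfloor$ edge-disjoint negative triangles exists.

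\emph{Backward direction.} Suppose such a family exists. Then
\[
    \left\lfloor \frac{n}{2} \right\rfloor \le p^-_\triangle(F_n,\sigma) = l(F_n,\sigma) \le l_{\max}(F_n) = \left\lfloor \frac{n}{2} \right\rfloor,
\]
so equality holds throughout.

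For completeness I would add a counting remark showing that no family can be larger. Every triangle in $F_n$ has the form $h v_i v_{i+1}$ and uses two spokes. The $n$ spokes then limit any family of edge-disjoint triangles to at most $\lfloor\frac{n}{2}\rfloor$ members. Hence ``at least'' and ``exactly'' $\lfloor\frac{n}{2}\rfloor$ coincide here.

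I do not expect any real obstacle. The only point needing care is this interpretive one about ``exactly'', and the spoke-counting bound disposes of it.
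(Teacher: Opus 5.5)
Your proof is correct and takes the same approach as the paper, which states this corollary without a separate proof as an immediate consequence of Theorem~\ref{thm:fan_disjoint_triangles} together with $l_{\max}(F_n)=\lfloor n/2\rfloor$. Your spoke-counting remark is also sound: every triangle has the form $hv_iv_{i+1}$ and uses two of the $n$ spokes. It settles how to read ``exactly'', and on its own it gives $l(F_n,\sigma)=p^-_\triangle(F_n,\sigma)\le\lfloor n/2\rfloor$, so you do not actually need to cite $l_{\max}(F_n)$.
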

 
We can further characterise the signatures maximising the frustration index combining Proposition~\ref{prop:-h-} and Corollary~\ref{cor:fan_negative_frustration}.

\begin{cor} \label{cor:Fn}
    For a fan graph $F_n$, $l(F_n, \sigma) = \lfloor\frac{n}{2}\rfloor$ if and only if $(F_n, \sigma)$  can be switched so that there are exactly $\lfloor\frac{n}{2}\rfloor$ edge-disjoint negative triangles, each with exactly and only one negative edge incident to $h$.
\end{cor}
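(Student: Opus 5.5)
We prove the two directions separately. The ``if'' direction is immediate from what precedes. Suppose $(F_n,\sigma)$ can be switched so that there are $\lfloor n/2\rfloor$ edge-disjoint negative triangles. Then
\[
    l(F_n,\sigma) \ge p^-(F_n,\sigma) \ge p^-_\triangle(F_n,\sigma) \ge \lfloor n/2\rfloor .
\]
The reverse inequality comes from $l(F_n,\sigma)\le l_{\max}(F_n)=\lfloor n/2\rfloor$, which is Theorem~\ref{thm:apex_max_frustration}. Hence equality holds. (This direction does not even need the extra condition that each triangle has exactly one negative edge, incident to $h$.)

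For the ``only if'' direction, assume $l(F_n,\sigma)=\lfloor n/2\rfloor$.

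\textbf{Step 1: a convenient minimum signature.} Since $F_n$ is an apex tree with apex $h$ and $d(h)=n$, Proposition~\ref{prop:-h-} applies. It lets us switch $\sigma$ to a minimum signature $\sigma'$ with the following properties:
\begin{itemize}
    \item every negative edge is incident to $h$;
    \item $d^-(h)=\lfloor n/2\rfloor$;
    \item since $\sigma'$ is minimum, $|E^-(F_n,\sigma')|=l(F_n,\sigma)=\lfloor n/2\rfloor$.
\end{itemize}
So the negative edges are exactly $\lfloor n/2\rfloor$ spokes $hv_{i_1},\dots,hv_{i_m}$, with $m=\lfloor n/2\rfloor$.

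\textbf{Step 2: the negative triangles.} By Corollary~\ref{cor:fan_negative_frustration} (equivalently Theorem~\ref{thm:fan_disjoint_triangles}), $(F_n,\sigma')$ has $m$ edge-disjoint negative triangles. Every triangle of $F_n$ has the form $hv_jv_{j+1}$, and the path edge $v_jv_{j+1}$ is positive under $\sigma'$. So a triangle is negative exactly when one of its two spokes is negative and the other is positive.

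Each negative spoke must lie in its own triangle of the family: if two negative spokes shared a triangle, that triangle would be positive. There are $m$ triangles and only $m$ negative spokes, so each triangle contains exactly one negative edge, and that edge is incident to $h$. This is precisely the required form.

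\textbf{Main obstacle.} The delicate point is Step 1. We need Proposition~\ref{prop:-h-} to deliver \emph{both} that all negative edges are at $h$ \emph{and} that the signature is minimum, so that the negative-edge count equals $\lfloor n/2\rfloor$ and the pigeonhole count in Step 2 is tight. If the proposition's proof does not directly give minimality, we instead invoke Theorem~\ref{thm:properly switched}: among the switchings of $\sigma$ with all negative edges at $h$, pick one with fewest negative edges.
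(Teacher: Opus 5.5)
Your proof is correct and follows the paper's route. The paper obtains this corollary exactly by combining Proposition~\ref{prop:-h-} (a minimum signature whose $\lfloor n/2\rfloor$ negative edges all lie at $h$) with Corollary~\ref{cor:fan_negative_frustration}, and your observation that, with all path edges positive, each negative triangle $hv_jv_{j+1}$ has exactly one negative spoke is the step the paper leaves implicit. The ``obstacle'' you flag is not one: Proposition~\ref{prop:-h-} asserts minimality outright, and in any case a signature with at most $\lfloor n/2\rfloor = l(F_n,\sigma)$ negative edges is automatically minimum.
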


Therefore, we can count the number of switching classes that maximise the frustration index.

\begin{cor}
For a fan graph $F_n$, there are $2^{\frac{n}{2}-1}$ switching classes maximising the frustration index when $n$ is even, and there are $\frac{n+3}{2}\times 2^{\frac{n-3}{2}}$ switching classes maximising the frustration index when $n$ is odd.
\end{cor}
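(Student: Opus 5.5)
The plan is to reduce the count to a counting problem on sign patterns of the path $P_n$, using the star at $h$ as spanning tree. By Theorem~\ref{thm:spanningtree} every switching class of $F_n$ contains a unique signature in which all spokes $hv_i$ are positive. Two signatures that are both all-positive on the spokes are switching equivalent if and only if they agree on the fundamental triangles $hv_iv_{i+1}$, that is, on the path edges. So the switching classes of $F_n$ are in bijection with the subsets $N\subseteq E(P_n)$ of negative path edges, and there are $2^{n-1}$ of them. It then remains to compute $l$ as a function of $N$ and to count the $N$ with $l=\lfloor n/2\rfloor$.

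\textbf{Step 1: a formula for $l$.} Decompose $N$ into maximal runs of consecutive negative path edges, of lengths $m_1,\dots,m_r$. The switching argument in the proof of Theorem~\ref{thm:fan_disjoint_triangles} treats each run separately. It turns a run with $k=m_j+1$ vertices into $\lfloor k/2\rfloor=\lceil m_j/2\rceil$ negative edges, with that many edge-disjoint negative triangles. Hence
\[
l(F_n,\sigma)=\sum_{j=1}^r \left\lceil \frac{m_j}{2}\right\rceil .
\]

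\textbf{Step 2: an upper bound with slack.} Consecutive runs are separated by at least one positive path edge, so $\sum_j m_j + (r-1)\le n-1$. Since $\lceil m/2\rceil\le (m+1)/2$, with equality iff $m$ is odd, we get
\[
\sum_j \left\lceil \frac{m_j}{2}\right\rceil \le \sum_j\frac{m_j+1}{2}\le \frac{n}{2},
\]
which recovers Theorem~\ref{thm:apex_max_frustration}. Define the deficiency $\delta := n/2-\sum_j\lceil m_j/2\rceil$, which is a nonnegative half-integer. It collects $1/2$ for each even run and $1/2$ for each path edge not accounted for by a run followed by a single separator. These surplus edges are the extra separators, the edges before the first run, and the edges after the last run. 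The maximising classes are exactly those with $\delta=0$ if $n$ is even, and $\delta=1/2$ if $n$ is odd.

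\textbf{Step 3: counting.} For $n$ even, $\delta=0$ forces every run to be odd, separated by exactly one positive edge, with no positive edges at either end. Writing $n=\sum_j(m_j+1)$ gives a bijection with compositions of $n$ into even parts, i.e.\ compositions of $n/2$. There are $2^{n/2-1}$ of these.

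For $n$ odd, exactly one ``defect'' of weight $1/2$ is allowed. It is one of the following:
\begin{itemize}
\item[(a)] one run has even length;
\item[(b)] one extra positive edge sits at the start or at the end of the path;
\item[(c)] one separator has length $2$.
\end{itemize}
The case $N=\emptyset$ with $n=1$ also fits this framework. I would encode each configuration as a composition of the parts $m_j+1$ with one distinguished defect, then sum the cases:
\begin{itemize}
\item For (b), there are $2\cdot 2^{(n-1)/2-1}$ configurations.
\item For (a) and (c) together, the defect corresponds to inserting one odd part into a composition of $n$ into otherwise even parts.
\end{itemize}
A composition of $n$ with exactly one odd part and all others even, the odd part possibly being $1$, has total count $\frac{n+1}{2}\cdot 2^{(n-3)/2}$. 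This is by choosing the odd part $2t+1$, then counting compositions of the remaining evens with a marked insertion point. I expect checking that cases (a) and (c) are both captured by a single odd part $m_j+1$ (with $m_j$ even) or an odd part from a doubled separator, without double counting, to be the main obstacle. I would verify it directly for $n=3,5$: the formula gives $3$ and $8$. Adding (b) then yields $\frac{n+1}{2}\cdot 2^{(n-3)/2}+2^{(n-3)/2}\cdot 1=\frac{n+3}{2}\cdot 2^{(n-3)/2}$, as claimed.
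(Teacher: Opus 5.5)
Your reduction is sound and differs from the paper's route. Normalising to all-positive spokes gives one canonical representative per switching class. The formula $l=\sum_j\lceil m_j/2\rceil$ and the deficiency characterisation are correct, and the even case is complete. The gap is the enumeration for odd $n$, which is wrong at two points whose errors cancel.

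First, consider the compositions of $n$ (parts $m_j+1$, plus a part $1$ for each doubled separator or extra end edge) with exactly one odd part. These are \emph{all} of (a), (b), (c) together: (b) is exactly the case where the odd part is a $1$ in the first or last position. There are $\frac{n+3}{2}\cdot 2^{(n-3)/2}$ of them, not $\frac{n+1}{2}\cdot 2^{(n-3)/2}$. So this set is not ``(a) and (c) together'', and adding (b) to it would double count.

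Second, your final sum adds $2^{(n-3)/2}$ for (b), although you had correctly counted (b) as $2\cdot 2^{(n-1)/2-1}=2^{(n-1)/2}$.

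For $n=5$: (a) is $\{(5),(3,2),(2,3)\}$, (c) is $\{(2,1,2)\}$, and (b) is $\{(1,4),(1,2,2),(4,1),(2,2,1)\}$. So (a)+(c) has $4$ elements, not $6$, and (b) has $4$, not $2$. Checking only the totals $3$ and $8$ cannot detect this.

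To repair it, read a configuration as a composition of $n$ into vertex blocks: the vertex set of each maximal negative run, and each vertex meeting no negative path edge as a block of size $1$. This is a bijection with the sets $N$, and Step 1 becomes $l=\sum_i\lfloor p_i/2\rfloor=\frac12\bigl(n-\#\{\text{odd parts}\}\bigr)$. So the maximising classes are the compositions with no odd part ($n$ even) or exactly one odd part ($n$ odd).

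In the odd case, subtracting $1$ from the odd part leaves a composition of $n-1$ into some number $k$ of even parts, together with one of $2k+1$ marks:
\begin{itemize}
\item one of the $k$ parts (case (a));
\item one of the $k-1$ inner gaps (case (c));
\item one of the two ends (case (b)).
\end{itemize}
With $K=(n-1)/2\ge 1$ this gives $\sum_{k=1}^{K}\binom{K-1}{k-1}(2k+1)=(K+2)2^{K-1}=\frac{n+3}{2}\cdot 2^{(n-3)/2}$. Here (a) and (c) contribute $\frac{n-1}{2}\cdot 2^{(n-3)/2}$ and (b) contributes $2^{(n-1)/2}$.

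The paper instead counts minimum representatives with all negative edges on spokes, via $|\mathcal{F}_n|=2|\mathcal{F}_{n-2}|$ or $2|\mathcal{F}_{n-2}|+|\mathcal{F}_{n-3}|$, and then quotients by switching $h$ for even $n$. Your normalisation needs no quotient and no uniqueness argument for minimum representatives. Note, though, that the two summands of your last line coincide numerically with $2|\mathcal{F}_{n-2}|$ and $|\mathcal{F}_{n-3}|$, not with your cases (a)+(c) and (b).
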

\begin{proof}
    Let $\mathcal{F}_n$ be the set of $(F_n, \sigma)$ such that $(F_n, \sigma)$ is minimum and maximises the frustration index, and there are no negatively adjacent vertices in $V(P_n)$.

    Let $\Sigma = (F_n, \sigma) \in \mathcal{F}_n$. By Corollary \ref{cor:Fn}, there are exactly $\lfloor \frac{n}{2} \rfloor$ edge-disjoint negative triangles in $\Sigma$, and each contains exactly one negative edge (and that edge is adjacent to the hub $h$).
     
    Let us focus on the case where $n$ is even first.  Since there are $\frac{n}{2}$ edge-disjoint negative triangles in $\Sigma$, there must be exactly $\frac{n}{2} - 1$ edge-disjoint negative triangles in $\Sigma' = \Sigma - \{ v_0, v_1 \}$. Thus, $\Sigma'$ belongs to $\mathcal{F}_{n-2}$, and there are two choices for the extra negative edge between $v_0$ and $v_1$. We obtain
    \[
        |\mathcal{F}_n| = 2 \times |\mathcal{F}_{n-2}| \quad \text{if } n \text{ is even.}
    \]
    Since $|\mathcal{F}_2| = 2$. As a result, when $n$ is even, we have $|\mathcal{F}_n| = 2^{\frac{n}{2}}$. 

    Let us now consider the case where $n$ is odd. If $v_0$ or $v_1$ (but not both) is negatively adjacent to $h$, then $\Sigma' = \Sigma - \{ v_0, v_1 \}$ belongs to $\mathcal{F}_{n-2}$ as above. Otherwise, then $v_2$ must be negatively adjacent to $h$, for otherwise there wouldn't be $\lfloor \frac{n}{2} \rfloor$ edge-disjoint negative triangles. Then $(h, v_1, v_2)$ is a negative triangle, and $\Sigma'' = \Sigma - \{ v_0, v_1, v_2 \}$ belongs to $\mathcal{F}_{n-3}$. We obtain
    \[
        |\mathcal{F}_n| =  2 \times |\mathcal{F}_{n-2}| + |\mathcal{F}_{n-3}| \quad \text{if } n \text{ is odd}.
    \]
     Since $|\mathcal{F}_3| = 3$, when $n$ is odd, we have $|\mathcal{F}_n| = \frac{n+3}{2}\times 2^{\frac{n-3}{2}}$. 

    We also need to consider the switching equivalence. When $n$ is even, for each $(F_n, \sigma) \in \mathcal{F}_n$, we have $(F_n, \sigma^{\{h\}}) \in \mathcal{F}_n$, which means that $\mathcal{F}_n$ contains only $\frac{|\mathcal{F}_n|}{2}$ switching classes. On the contrary, when $n$ is odd, switching $\{h\}$ will increase the number of negative edges. In sum, we have $2^{\frac{n}{2}-1}$ switching classes maximising the frustration index when $n$ is even, and $\frac{n+3}{2}\times 2^{\frac{n-3}{2}}$ switching classes maximising the frustration index when $n$ is odd.
\end{proof}

\section{Wheel Graphs} \label{section:wheel_graphs}

In this section we discuss the frustration index of the wheel graph $W_n = K_1 \vee C_n$. 
We shall be able to derive results analogous to those for fan graphs.
The only outlier is the negative wheel graph $-W_n$ for $n$ odd.
This very special graph will allow to construct a counterexample to Conjecture \ref{conjecture:disjoint_cycles}, but for now let us consider the signed wheels.

Let the vertex in $K_1$ be $h$, and label the vertices in $C_n$ in clockwise way:
 \[
    V(C_n) = \{v_0, v_1, \cdots, v_i, \cdots, v_{n-1}\}, i\in\mathbb{Z}_n.
 \] 
 $V_{[i, j]}$ denotes the set of vertices $\{v_i, v_{i+1}, \cdots, v_j\}$.
\begin{lemma}\label{lemma:negative_wheel}
    The frustration index of $-W_n$ when $n$ is odd is
    \[
        l(-W_n) = \left\lceil{\frac{n}{2}}\right\rceil.
    \]
\end{lemma}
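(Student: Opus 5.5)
The plan is to use $l(-G) = |E(G)| - \text{MaxCut}(G)$. With $n = 2k+1$, the wheel $W_n$ has $2n = 4k+2$ edges, so the claim $l(-W_n) = k+1$ is equivalent to $\text{MaxCut}(W_{2k+1}) = 3k+1$. I will prove the two inequalities separately.

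For the upper bound on the frustration index, I will exhibit a cut of size $3k+1$. Take $X = \{h\} \cup \{v_1, v_3, \dots, v_{2k-1}\}$, so $h$ goes together with $k$ alternate rim vertices. The spokes from $h$ to the $k+1$ rim vertices $v_0, v_2, \dots, v_{2k}$ outside $X$ are all cut. On the rim, each of the $k$ vertices $v_{2j-1}$ in $X$ has both rim edges cut, giving $2k$ rim edges, since these vertices are pairwise non-adjacent. The total is $k+1+2k = 3k+1$, hence $l(-W_n) \le (4k+2)-(3k+1) = k+1$. Equivalently, switching $X$ leaves exactly $k+1$ negative edges: the $k$ spokes $hv_{2j-1}$ and the rim edge $v_{2k}v_0$.

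For the lower bound, I will show that every cut misses at least $k+1$ edges. Let $X$ be a side of a cut, and by symmetry (replacing $X$ by its complement) assume $h \in X$. Let $a = |X \cap V(C_n)|$. The uncut spokes are exactly the $a$ spokes to rim vertices in $X$. The rim $C_{2k+1}$ is an odd cycle, so at least one rim edge is uncut. More precisely, the rim vertices in $X$ form some number $r$ of maximal arcs. Each arc of length $\ell$ contains $\ell-1$ uncut edges, and the complementary arcs contribute similarly.

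I will split into two cases. If $a \ge k+1$, the uncut spokes alone already number at least $k+1$. If $a \le k$, the rim vertices outside $X$ number $2k+1-a \ge k+1$, and they lie in at most $r \le a$ maximal arcs, since the arcs alternate between $X$ and its complement around the cycle. These arcs therefore contain at least $(2k+1-a) - a$ uncut rim edges. Adding the $a$ uncut spokes gives at least $2k+1-a \ge k+1$ uncut edges. One special case needs care: if $a = 0$, the whole rim is one arc with $2k+1$ uncut edges, and the same bound holds.

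The main obstacle is keeping the arc-counting argument clean. An alternative is to cite the edge-disjoint cycle bound, but the paper suggests that $p^-(-W_{2k+1}) < l$, so disjoint negative cycles cannot give the lower bound. The direct max-cut counting is therefore the route I commit to.
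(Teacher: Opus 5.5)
Your argument is correct, but it takes a different route from the paper. You work entirely on the max-cut side of $l(-G)=|E(G)|-\text{MaxCut}(G)$. The explicit cut $X=\{h,v_1,v_3,\dots,v_{2k-1}\}$ gives $l(-W_n)\le k+1$. For the lower bound, a global count shows that every cut with $h\in X$ leaves at least $a$ uncut spokes and at least $(2k+1-a)-r\ge 2k+1-2a$ uncut rim edges inside the complementary arcs, hence at least $\max\{a,2k+1-a\}\ge k+1$ uncut edges. The paper instead fixes a minimum signature in the switching class of $-W_n$ and splits the rim into $N^+(h)$ and $N^-(h)$. It then applies Iacono's criterion ($d^-(S)\le d^+(S)$ for every $S$) to small vertex sets to show three things: no two vertices of $N^-(h)$ are adjacent, no three consecutive rim vertices lie in $N^+(h)$, and at most one rim edge joins two vertices of $N^+(h)$. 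The parity of $n$ together with $d^-(h)\le\frac{n-1}{2}$ then forces exactly $\frac{n-1}{2}$ negative spokes and one negative rim edge. The two proofs look at the same object from dual sides: switching $-W_n$ at $X\ni h$ gives $N^-(h)=X\cap V(C_n)$, and the negative edges are exactly the uncut ones. The paper argues by local non-improvability, whereas you argue by global counting. Your version is shorter and self-contained, and it avoids both the minimality criterion and the three structural claims. The paper's version additionally describes the shape of every minimum signature in the class of $-W_n$, which is the configuration used to build the counterexample of Figure~\ref{fig:disjoint_cycles}. You can recover that shape from the equality case of your count. For $1\le a\le 2k$ the number of uncut edges is exactly $2k+1+a-2r$, and this equals $k+1$ only when $a=r=k$, i.e.\ the $k$ rim vertices on $h$'s side are pairwise non-adjacent and exactly one complementary arc has two vertices.
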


\begin{proof}
   We prove this theorem by directly finding a minimum signature $(W_n, \sigma)$ switching equivalent to $-W_n$ with $\left\lceil{\frac{n}{2}}\right\rceil$ negative edges.
   
   
   We first partition the vertices of $(W_n, \sigma)$ into three vertex subsets:
 \[
    \{h\} \cup N^+(h) \cup N^-(h), 
 \]
 where $N^+(h)$ is the set of all positive neighbours of $h$, and $N^-(h)$ is the set of all negative neighbours of $h$. Since $(W_n, \sigma)$ is switching equivalent to $-W_n$, all triangles in it are negative. Therefore, if two vertices in $N^-(h)$ are adjacent, they must be negatively adjacent. Similarly, if two vertices in $N^+(h)$ are adjacent, they must be negatively adjacent as well.
 

We now prove three claims:
 
 \begin{claim}
     Vertices in $N^-(h)$ cannot be adjacent.
 \end{claim}
 
 \begin{proof}
     Supposing that there are two adjacent vertices in $N^-(h)$, which can be $v_0$ and $v_1$ by symmetry, they must be negatively adjacent. Therefore, 
    \[
        d^-(v_0) = d^-(v_1) = 2 > d^+(v_0) = d^+(v_1) = 1,
    \]
    which is a contradiction.
 \end{proof}
 
 \begin{claim}\label{claim2}
     No more than two vertices in $N^+(h)$ can be consecutively adjacent.
 \end{claim}
 
 \begin{proof}
     Supposing that there are three consecutively adjacent vertices in $N^+(h)$, which can be $v_0$, $v_1$, and $v_2$ by symmetry, they must be negatively and consecutively adjacent. We have
    \[
        d^-(v_1)=2>d^+(v_1)=1
    \]
    which is a contradiction.
 \end{proof}
 
\begin{claim}
    There can be at most one pair of adjacent vertices in $N^+(h)$, which are negatively adjacent.
\end{claim}

\begin{proof}
    Supposing that $v_0$ and $v_1$ are adjacent vertices in in $N^+(h)$ by symmetry, and $v_i$ and $v_{i+1}$ is another pair of adjacent vertices in $N^+(h)$ such that $i$ is minimum.
    We have proved that vertices in $N^-(h)$ cannot be adjacent, so vertices in $N^+(h)$ and vertices $N^-(h)$ must appear alternatively in $V_{[2, i-1]}$. Since no more than two vertices in $N^+(h)$ can be consecutively adjacent, we have $v_2, v_{i-1} \in N^-(h)$. Thus,
     \[
    |N^-(h)\cap V_{[2, i-1]}|-|N^+(h)\cap V_{[2, i-1]}| = 1.
    \]
 
    Since $(v_0, v_1)$ and $(v_i, v_{i+1})$ are negative, we have
    \[
        d^-(V_{[1,i]}) - d^+(V_{[1,i]}) = 1,
    \]
    which is a contradiction.
\end{proof}
 
 Taken together, in $(W_n, \sigma)$, vertices in $N^+(h)$ and vertices in $N^-(h)$ alternate when we ignore the single possible pair of negatively adjacent vertices in $N^+(h)$. Since $n$ is odd and $d^-(h)\le\lfloor\frac{d(h)}{2}\rfloor=\frac{n-1}{2}$, there can be at most $\frac{n-1}{2}$ vertices in $N^-(h)$. Hence, there will be at least $\frac{n+1}{2}$ vertices in $N^+(h)$. Vertices in $N^+(h)$ and vertices in $N^-(h)$ can alternate only when there are two negatively adjacent vertices in $N^+(h)$ and $|N^-(h)| = \frac{n-1}{2}$.
 Therefore,
\[
    l(-W_n) = l(W_n, \sigma_0) = |E^-(W_n, \sigma_0)| = \lceil\frac{n}{2}\rceil.
\]
\end{proof}

Since every cycle must contain the vertex $h$ except $C_n$, we know that $p^-(-W_n) = p(W_n) = \lfloor \frac{n}{2} \rfloor$, which is the maximum number of edge-disjoint triangles in $W_n$. Therefore, when $n$ is odd, $l(-W_n) = p^-(-W_n) + 1$. It is easier to find the frustration index of $-W_n$ when $n$ is even.

\begin{lemma}\label{lemma:negative_wheel_even}
    The frustration index of $-W_n$ when $n$ is even is
    \[
        l(-W_n) = \frac{n}{2}.
    \]
\end{lemma}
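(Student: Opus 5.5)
We establish matching lower and upper bounds on $l(-W_n)$, both equal to $\frac{n}{2}$.

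\textbf{Lower bound.} Label the rim $v_0, \dots, v_{n-1}$ as above. The triangles $(h, v_{2j}, v_{2j+1}, h)$ for $j = 0, 1, \dots, \frac{n}{2}-1$ are pairwise edge-disjoint. Each rim edge $v_{2j}v_{2j+1}$ lies in exactly one of them, and each spoke $hv_i$ is used at most once. In $-W_n$ every triangle is negative, so $p^-(-W_n) \ge \frac{n}{2}$. By \eqref{eq:bounds}, $l(-W_n) \ge \frac{n}{2}$.

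\textbf{Upper bound.} We exhibit a switching of $-W_n$ with exactly $\frac{n}{2}$ negative edges. The cleanest way is through the max-cut formula $l(-G) = |E(G)| - \max_X |E(X,\overline{X})|$. Here $|E(W_n)| = 2n$, so it suffices to find a cut of size $\frac{3n}{2}$.

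Take $X = \{h, v_0, v_2, \dots, v_{n-2}\}$, that is, the hub together with the even rim vertices. Since $n$ is even, the rim $C_n$ is bipartite with parts the even and the odd vertices, so all $n$ rim edges cross the cut. The spokes to the odd vertices also cross the cut, contributing $\frac{n}{2}$ more edges. The cut therefore has size $n + \frac{n}{2} = \frac{3n}{2}$, and $l(-W_n) \le 2n - \frac{3n}{2} = \frac{n}{2}$.

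Equivalently, switching $X$ in $-W_n$ leaves negative only the $\frac{n}{2}$ spokes $hv_{2j}$. One can check directly that this signature is minimum via Theorem~\ref{thm:properly switched}, but the cut argument already gives the bound.

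There is no serious obstacle. The only point needing care is that the lower-bound triangles are genuinely edge-disjoint, which holds because consecutive pairs $\{v_{2j}, v_{2j+1}\}$ partition the rim vertices when $n$ is even.
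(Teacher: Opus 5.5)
Your proof is correct and is essentially the paper's argument. The paper switches $\{v_0, v_2, \ldots, v_{n-2}\}$ to obtain a signature with $\frac{n}{2}$ negative spokes and $\frac{n}{2}$ edge-disjoint negative triangles, then sandwiches $l$ between $p^-$ and $|E^-|$; your max-cut set $X=\{h,v_0,v_2,\ldots,v_{n-2}\}$ gives the mirror-image switching, and your triangle family supplies the same lower bound.
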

\begin{proof}
    Switching the vertex subset $\{v_0, v_2, v_4, \cdots, v_{n-2}\}$ leads to a minimum signature $(W_n, \sigma_0)$ such that
    \[
        |E^-(W_n, \sigma_0)| = p^-(W_n, \sigma_0) = p^-_{\triangle}(W_n, \sigma_0) = \frac{n}{2}.
    \]
    Therefore, $ l(-W_n) = l(W_n, \sigma_0) = \frac{n}{2}$.
\end{proof}

\begin{theorem}\label{thm:wheel_disjoint_cycles}
    When $n$ is even, or when $n$ is odd and $(W_n, \sigma)$ is not switching equivalent to $-W_n$, $l(W_n, \sigma) = p^-(W_n, \sigma) = p^-_\triangle (W_n, \sigma)$. 
\end{theorem}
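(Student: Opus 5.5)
The plan is to reproduce the argument of Theorem~\ref{thm:fan_disjoint_triangles}, with the rim $C_n$ in the role of the path $P_n$. The goal is a switching of $(W_n,\sigma)$ in which the negative edges can be matched injectively to edge-disjoint triangles, each triangle containing exactly one negative edge. Once that is done, the chain
\[
|E^-(W_n,\sigma')| \le p^-_\triangle(W_n,\sigma) \le p^-(W_n,\sigma) \le l(W_n,\sigma) \le |E^-(W_n,\sigma')|
\]
collapses, by \eqref{eq:bounds} and switching invariance, and gives the theorem.

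\textbf{Step 1 (normalise).} The star of spokes $\{hv_i\}$ is a spanning tree of $W_n$. By Theorem~\ref{thm:spanningtree}, I switch so that every spoke is positive, which puts all negative edges on the rim. In this form a triangle $(h,v_i,v_{i+1})$ is negative exactly when $v_iv_{i+1}$ is negative.

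\textbf{Step 2 (all rim edges negative).} If every rim edge is negative, then:
\begin{itemize}
\item For $n$ odd, the signature is $-W_n$ switched at $\{h\}$, which is the excluded case.
\item For $n$ even, I switch the set $\{v_0,v_2,\dots,v_{n-2}\}$, exactly as in Lemma~\ref{lemma:negative_wheel_even}. This leaves $n/2$ negative edges, and they lie in $n/2$ edge-disjoint triangles, each containing exactly one negative edge.
\end{itemize}

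\textbf{Step 3 (some rim edge positive).} Otherwise the negative rim edges split into maximal negative runs, that is, maximal paths $v_i v_{i+1}\cdots v_{i+k-1}$ with all edges negative. Consecutive runs are separated by at least one positive rim edge, so distinct runs have disjoint vertex sets.

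On each run I apply the switching from the proof of Theorem~\ref{thm:fan_disjoint_triangles}. That switching acts only on interior vertices of alternate positions. It turns the run into a set of negative spokes at those vertices, plus possibly the last rim edge when $k$ is even. It also provides the triangles $(h,v_i,v_{i+1}),(h,v_{i+2},v_{i+3}),\dots$, which are pairwise edge-disjoint and each contain exactly one negative edge.

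Because the switched vertices are interior to their run, no edge outside the run changes sign: the spokes at endpoints and the separating positive rim edges are untouched. Triangles coming from different runs use only spokes and rim edges at vertices of their own run, so by vertex-disjointness of the runs they are edge-disjoint. Edges that were positive and lie in no run stay positive.

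\textbf{Step 4 (conclude).} Every negative edge now lies in exactly one triangle of an edge-disjoint family in which each triangle has a single negative edge. The chain of inequalities above then closes.

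The main obstacle is the bookkeeping in Step 3. I need to confirm that the fan switching really touches only interior vertices, and that it creates no new negative spoke shared between the triangle families of two runs. This is where the wrap-around of the cycle differs from the path: a run could abut itself around the rim only if all rim edges are negative, which is exactly the situation already handled in Step 2.
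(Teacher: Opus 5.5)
Your proposal is correct and follows essentially the same route as the paper. Both switch all negative edges onto the rim, treat the all-negative-rim case (the switching class of $-W_n$) separately via Lemma~\ref{lemma:negative_wheel_even}, and otherwise apply the fan switching run by run to get a minimum signature whose negative edges each lie in their own edge-disjoint triangle. Your explicit check that the runs are vertex-disjoint, including the wrap-around case, is a useful detail the paper leaves implicit.
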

\begin{proof}
   We assume that every $(W_n, \sigma)$ is switched so that all negative edges are in $C_n$. There will be two cases.

    \begin{case}\label{case:wheel_disjoint_1}
        There are no incident negative edges in $C_n$.
    \end{case}
    
    In this case, we have exact $|E^-(W_n, \sigma)|$ edge-disjoint negative triangles, each with one negative edge. Therefore, 
    \[
        l(W_n, \sigma) = |E^-(W_n, \sigma)| = p^-(W_n, \sigma) = p^-_{\triangle}(W_n, \sigma).
    \]

    \begin{case}\label{case:wheel_disjoint_2}
        There exist incident negative edges in $C_n$.
    \end{case}

    If all edges in $C_n$ are negative, $(W_n, \sigma)$ is in the switching class with $-W_n$. 
    When $n$ is even, we have $l(W_n, \sigma)=\frac{n}{2} = p^-_{\triangle}(W_n, \sigma)$. When $n$ is odd, this is the only case that $l(W_n, \sigma) = p^-(W_n, \sigma) + 1$, which we have ruled out. 

    If there exists at least one positive edge in $C_n$, after a series of switching similar to Case~\ref{case:fan_disjoint_2} in Theorem~\ref{thm:fan_disjoint_triangles}, we obtain a minimum $(W_n, \sigma)$ with exactly $|E^-(W_n, \sigma)|$ edge-disjoint negative triangles, each with one negative edge.
\end{proof}

For any signature $(W_n, \sigma)$, it is trivial that $p^-(W_n, \sigma) = p^-_{\triangle}(W_n, \sigma) \le p(W_n) = \lfloor \frac{n}{2} \rfloor$. Therefore, we have the following corollary. 

\begin{cor}\label{cor:wheel_max_frutration}
    For the wheel graph, we have $l_{\max}(W_n) = l(-W_n) = \lceil \frac{n}{2} \rceil.$ Additionally, when $n$ is odd, $-W_n$ is in the unique switching class that maximises the frustration index. When $n$ is even, $(W_n, \sigma)$ maximises the frustration index if and only if it contains $\frac{n}{2}$ edge-disjoint negative triangles.
\end{cor}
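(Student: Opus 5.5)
The corollary follows by combining Lemmas~\ref{lemma:negative_wheel} and \ref{lemma:negative_wheel_even} with Theorem~\ref{thm:wheel_disjoint_cycles} and the trivial bound $p^-_\triangle(W_n,\sigma) \le p^-(W_n,\sigma) \le p(W_n) = \lfloor \frac{n}{2} \rfloor$. The only ingredient not proved earlier is $p(W_n) = \lfloor n/2\rfloor$. Every cycle other than the rim $C_n$ passes through $h$ and uses two spokes, so a family of edge-disjoint cycles containing the rim has size $1$, and any other family has size at most $\lfloor n/2 \rfloor$. Conversely, the triangles $h v_{2i} v_{2i+1}$ attain $\lfloor n/2 \rfloor$, which is at least $1$ for $n\ge 2$.

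The plan is to fix an arbitrary signature $\sigma$ and bound $l(W_n,\sigma)$, splitting into cases as Theorem~\ref{thm:wheel_disjoint_cycles} does.

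\emph{Case 1: $n$ is even.} Theorem~\ref{thm:wheel_disjoint_cycles} gives $l(W_n,\sigma) = p^-_\triangle(W_n,\sigma) \le n/2$. Lemma~\ref{lemma:negative_wheel_even} gives $l(-W_n) = n/2$. Hence $l_{\max}(W_n) = l(-W_n) = n/2 = \lceil n/2\rceil$. The characterisation is immediate from the same equality: $l(W_n,\sigma) = n/2$ if and only if $p^-_\triangle(W_n,\sigma) = n/2$, that is, $(W_n,\sigma)$ contains $n/2$ edge-disjoint negative triangles.

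\emph{Case 2: $n$ is odd.} If $\sigma$ is not switching equivalent to $-W_n$, Theorem~\ref{thm:wheel_disjoint_cycles} gives
\[
l(W_n,\sigma) = p^-_\triangle(W_n,\sigma) \le \tfrac{n-1}{2} < \tfrac{n+1}{2}.
\]
On the other hand, Lemma~\ref{lemma:negative_wheel} gives $l(-W_n) = \frac{n+1}{2} = \lceil n/2\rceil$. Therefore the maximum equals $l(-W_n)$ and is attained only by the switching class of $-W_n$, which is the uniqueness claim.

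The argument is a direct assembly of earlier results. The one point needing care is that Theorem~\ref{thm:wheel_disjoint_cycles} really covers every signature not switching equivalent to $-W_n$ for odd $n$, and this holds by its hypothesis. No step is a real obstacle; the only thing to guard against is conflating $\lceil n/2 \rceil$ with $\lfloor n/2\rfloor$ across the two parity cases.
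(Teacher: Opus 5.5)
Your proposal is correct and follows the same route as the paper. The paper also obtains the corollary by combining Lemmas \ref{lemma:negative_wheel} and \ref{lemma:negative_wheel_even} with Theorem \ref{thm:wheel_disjoint_cycles} and the bound $p^-_\triangle(W_n,\sigma) \le p^-(W_n,\sigma) \le p(W_n) = \lfloor n/2 \rfloor$, splitting by parity. Your justification of $p(W_n) = \lfloor n/2 \rfloor$ treats the rim separately, which is needed when $n=3$, and so spells out the step the paper calls trivial.
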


    

We can then count the switching classes that maximise the frustration index.

\begin{cor}
    For the wheel graph $W_n$ when $n$ is even, there are $2^{\frac{n}{2}+1}-1$ switching classes that maximise the frustration index.
\end{cor}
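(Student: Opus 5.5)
The plan is to parametrise switching classes of $W_n$ by their sets of negative triangles, and then count via Corollary~\ref{cor:wheel_max_frutration}.

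\emph{Step 1 (switching classes as triangle subsets).} Take the spanning star of $W_n$ centred at $h$. Its chords are exactly the rim edges $v_iv_{i+1}$, $i \in \mathbb{Z}_n$. The corresponding fundamental cycles are the $n$ triangles
\[
    \Delta_i = (h, v_i, v_{i+1}, h), \qquad i \in \mathbb{Z}_n .
\]
By the fact recalled in Section~\ref{subsection:signed_graphs}, two signatures of $W_n$ are switching equivalent if and only if they have the same set of negative fundamental cycles. Conversely, every subset $N \subseteq \mathbb{Z}_n$ is realised: make all spokes positive and let $v_iv_{i+1}$ be negative exactly when $i \in N$. This gives a bijection between switching classes of $W_n$ and subsets $N \subseteq \mathbb{Z}_n$, where $N = \{ i : \Delta_i \text{ is negative} \}$.

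\emph{Step 2 (the maximising condition).} By Corollary~\ref{cor:wheel_max_frutration}, for $n$ even, $(W_n,\sigma)$ maximises the frustration index if and only if it contains $\frac n2$ edge-disjoint negative triangles. When $n \ge 4$, the only triangles of $W_n$ are the $\Delta_i$; for $n=3$ the rim is itself a triangle, but $n$ is even here, so that case does not arise. Two triangles $\Delta_i$ and $\Delta_j$ share an edge exactly when they share the spoke $hv_{i+1}$ or $hv_i$, that is, when $j = i \pm 1$; non-consecutive triangles are edge-disjoint. So we need $\frac n2$ pairwise non-consecutive indices in $N$, regarded as a subset of the cycle $\mathbb{Z}_n$. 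On a cycle of even length $n$, the only independent sets of size $\frac n2$ are the even indices $E_0$ and the odd indices $E_1$. Hence the class of $N$ is maximising if and only if $E_0 \subseteq N$ or $E_1 \subseteq N$.

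\emph{Step 3 (counting).} Apply inclusion--exclusion:
\[
    |\{N : E_0 \subseteq N\}| + |\{N : E_1 \subseteq N\}| - |\{N : E_0 \cup E_1 \subseteq N\}| = 2^{n/2} + 2^{n/2} - 1 = 2^{\frac n2 + 1} - 1 .
\]

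The only step needing genuine care is Step 2. There one must check that edge-disjointness of the $\Delta_i$ is exactly non-adjacency on the cycle of indices, and that the maximum independent sets of $C_n$ for even $n$ are precisely the two parity classes. The latter follows because each of the $n$ edges $\{i,i+1\}$ of the index cycle contains at most one chosen index, while $\frac n2$ chosen indices must cover all $n$ vertices' neighbourhoods. Concretely, if two chosen indices had a gap of two or more unchosen indices between them, there would be fewer than $\frac n2$ chosen indices. So the chosen indices alternate with the unchosen ones, which forces one of the two parity classes.
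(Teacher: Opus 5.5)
Your proposal is correct and follows the paper's own argument. In both, the triangles are the fundamental cycles of the spoke star, the only two families of $\frac{n}{2}$ edge-disjoint triangles are the two parity classes, each contributes $2^{n/2}$ switching classes, and the all-negative class is counted twice. Your independent-set argument for why only the parity classes work, and your explicit realisation of every subset $N$ in Step~1, fill in steps the paper merely asserts.
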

\begin{proof}
    All edges that are adjacent to $h$ form a spanning tree of $W_n$. Therefore, the set of all triangles is a fundamental cycle set. We have only two ways to allocate $\frac{n}{2}$ edge-disjoint triangles:
    \[
        \{(h, v_0, v_1, h), (h, v_2, v_3, h), \cdots, (h, v_{n-2}. v_{n-1}, h) \},
    \]
    and
    \[
        \{(h, v_1, v_2, h), (h, v_3, v_4, h), \cdots, (h, v_{n-1}, v_0, h)\}.
    \]
    For each allocation, we can construct $2^{\frac{n}{2}}$ switching classes that maximise the frustration index by assigning different signs (positive or negative) to the remaining $\frac{n}{2}$ triangles. They are different switching classes for they do not share the same set of negative fundamental cycles. However, for the $2^{\frac{n}{2}}$ switching classes corresponding to the first allocation and the $2^{\frac{n}{2}}$ switching classes corresponding to the second allocation, there is one repetition, which is $-W_n$, where all triangles are negative. Therefore, there are $2^{\frac{n}{2}+1}-1$ switching classes that maximise the frustration index.
\end{proof}

We can now construct a counterexample to Conjecture~\ref{conjecture:disjoint_cycles}; it is the signed graph $\Sigma$ in Figure~\ref{fig:disjoint_cycles}. We note that $|\Sigma|$ is indeed $2$-connected, and has minimum degree $3$, hence it satisfies our assumption. 

\begin{figure}[htbp]    
    \centering
    \includegraphics[width=0.3\linewidth]
    {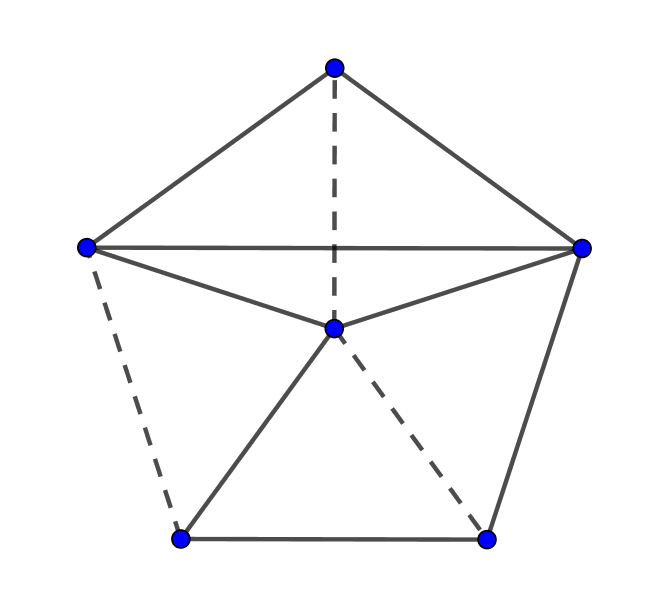}
    \caption{Counterexample $\Sigma$ to Conjecture \ref{conjecture:disjoint_cycles}}
    \label{fig:disjoint_cycles}
    \end{figure}

The signed graph $\Sigma$ is formed by connecting two non-adjacent vertices in a minimum signed wheel graph $(W_5, \sigma)$ in the switching class of $-W_5$. Since  $l(-W_5) = 3$, we have $l(\Sigma) \ge 3$ (and since $\Sigma$ only has three negative edges, we obtain $l( \Sigma ) = 3$). Additionally, it is not hard to check that $p^-(\Sigma) = 2$, while $p_\triangle( \Sigma ) = p(\Sigma) = 3$. Therefore, $p^-(\Sigma) < \min (l(\Sigma), p_\triangle(\Sigma))$. In fact, $\Sigma$ is planar and thus this conjecture does not even hold for planar graphs.

\section{Complete Split Graphs} \label{section:complete_split_graphs}

In this section, we consider complete split graphs $S_n^m = K_n \vee \overline{K}_m$. 
We have already seen that $S_2^3 = K_1 \vee S_4$ is a graph for which the negative signature does not maximise the frustration index.
However, we will prove that for $n$ large enough, both $-S_n^2$ and $-S_n^3$ maximise the frustration index uniquely.

We begin by determining the negative frustration index of any complete split graph.

\begin{lemma}\label{lemma:negative_split}
    The frustration index of the all-negative complete split graph $-S_n^m=-(K_n\vee\overline{K}_m)$ is 
    \begin{equation}
    l(-S_n^m) = \left\{
    \begin{array}{lr}
        \lfloor\frac{(n-1)^2}{4}+\frac{mn}{2}-\frac{m^2}{4}\rfloor, n > m, \\
        \frac{n(n-1)}{2}, n \le m.
    \end{array}
    \right.
    \end{equation}
\end{lemma}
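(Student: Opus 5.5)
Since $l(-G) = |E(G)| - \text{MaxCut}(G)$, I will compute the maximum cut of $S_n^m = K_n \vee \overline{K_m}$. Here $|E| = \binom{n}{2} + nm$.

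\emph{Parametrising cuts.} A cut $(X,\overline{X})$ is determined, up to the edges it crosses, by how many clique vertices and independent vertices lie in $X$. Let $X$ contain $a$ of the $n$ clique vertices and $b$ of the $m$ independent vertices. The crossing edges are of three kinds: $a(n-a)$ inside the clique, and $a(m-b) + (n-a)b$ between the clique and the independent set. The independent set contributes no internal edges. Hence
\[
    \text{cut}(a,b) = a(n-a) + a(m-b) + b(n-a).
\]
The problem therefore reduces to maximising $f(a,b) = a(n-a) + am + b(n-2a)$ over integers $0 \le a \le n$ and $0 \le b \le m$.

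\emph{Maximisation.} For fixed $a$, $f$ is linear in $b$. If $a \le n/2$ we take $b = m$, and if $a \ge n/2$ we take $b = 0$; by the symmetry $a \mapsto n-a$ (complementing $X$) we may assume $a \le n/2$ and $b = m$. This gives
\[
    g(a) = a(n-a) + m(n-a) = -a^2 + (n-m)a + mn,
\]
a concave quadratic with vertex at $a^* = (n-m)/2$.

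If $n \le m$, then $a^* \le 0$, so the maximum is at $a = 0$ with value $mn$. In that case $l(-S_n^m) = \binom{n}{2} + nm - nm = n(n-1)/2$, which is the second case of the statement.

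If $n > m$, the integer optimum is at $a = \lfloor (n-m)/2 \rfloor$ (or its ceiling), which lies in $[0, n/2]$. The maximum value is $mn + \lfloor (n-m)^2/4 \rfloor$. Then
\[
    l(-S_n^m) = \frac{n(n-1)}{2} + nm - nm - \left\lfloor \frac{(n-m)^2}{4} \right\rfloor .
\]
Since $\frac{n(n-1)}{2}$ is an integer, this equals
\[
    \left\lceil \frac{n(n-1)}{2} - \frac{(n-m)^2}{4} \right\rceil
    = \left\lceil \frac{n^2}{4} - \frac{n}{2} + \frac{mn}{2} - \frac{m^2}{4} \right\rceil .
\]

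\emph{Matching the stated form.} It remains to rewrite this ceiling as $\lfloor (n-1)^2/4 + mn/2 - m^2/4 \rfloor$. Write $x = \frac{n^2}{4} - \frac{n}{2} + \frac{mn}{2} - \frac{m^2}{4}$, so the stated quantity is $\lfloor x + \tfrac14 \rfloor$. Now $4x = n^2 - 2n + 2mn - m^2$, and modulo $4$ this is $n^2 - m^2 - 2n + 2mn$, so the fractional part of $x$ is $0$ or $3/4$:

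\begin{itemize}
\item If $n \equiv m \pmod 2$, then $n^2 - m^2 \equiv 0 \pmod 4$ and $2n(m-1) \equiv 0 \pmod 4$ when both are even or both odd. So $x$ is an integer and $\lfloor x + \tfrac14 \rfloor = x = \lceil x \rceil$.
\item If the parities differ, $n^2 - m^2 \equiv \pm 1$. If $n$ is even and $m$ is odd, then $2n(m-1) \equiv 0$ and $n^2 - m^2 \equiv -1$, so $x$ has fractional part $3/4$. If $n$ is odd and $m$ is even, then $2n(m-1) \equiv 2$ and $n^2 - m^2 \equiv 1$, so $4x \equiv 3$ and again the fractional part is $3/4$. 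In both subcases $\lfloor x + \tfrac14 \rfloor = \lceil x \rceil$.
\end{itemize}

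The only step needing care is this final parity bookkeeping, which reconciles the ceiling with the stated floor expression. Everything before it is a direct maximisation, including checking that $a = 0$ is optimal when $n \le m$ and the symmetry reduction that allows $b \in \{0, m\}$.
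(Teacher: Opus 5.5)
Your proposal is correct and follows the paper's route: you reduce to $|E| - \mathrm{MaxCut}$, parametrise cuts by how many clique vertices lie on each side with all of $\overline{K_m}$ together, and maximise the concave quadratic $(n-a)(a+m)$, which is the paper's $i(n-i)+im$ with $i = n-a$. Your version also spells out two steps the paper leaves implicit: that all of $\overline{K_m}$ may be placed on one side (because the cut size is linear in $b$), and the parity check turning $\binom{n}{2} - \lfloor (n-m)^2/4 \rfloor$ into the stated floor expression.
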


\begin{proof}
    Since $S_n^m$ is highly symmetric, we first partition $V(K_n)$ into two sets $V_i$ and $V_{n-i}$, one with $i$ vertices and the other one with $(n-i)$ vertices where $\lceil \frac{n}{2} \rceil \le i \le n$. Then, we partition $V(S_n^m)$ into two vertex subsets:
    \[
        V(S_n^m) = V_{i}\cup (V_{n - i}\cup V(\overline{K_m}))
    \]
    The maximum number of edges between the above two vertex subsets is the number of edges of the max-cut of $S_n^m$: 
    \begin{align*}
        \max_{X\subseteq V(S_n^m)}|E(X, \overline{X})| & = \max_{\lceil \frac{n}{2} \rceil\le i \le n}\{i \times(n-i)+i\times m\}\\
        & =\max_{\lceil \frac{n}{2} \rceil\le i \le n} \{ -(i - \frac{m+n}{2})^2 + \frac{(m+n)^2}{4}\}.
    \end{align*}
    Therefore, 
    \begin{align*}
        \max\limits_{X\subseteq V(S_n^m)}|E(X, \overline{X})| = \left\{
    \begin{array}{lr}
        \lceil\frac{(m+n)^2-1}{4}\rceil, n > m, \\
        mn, n \le m.
    \end{array}
    \right.
    \end{align*}
    
    Now we get the frustration index of $-S_n^m$ :
    \begin{equation}
    l(-S_n^m) = E(S_n^m)-\max_{X\subseteq V(S_n^m)}|E(X, \overline{X})| = \left\{
    \begin{array}{lr}
        \lfloor\frac{(n-1)^2}{4}+\frac{mn}{2}-\frac{m^2}{4}\rfloor, n > m, \\
        \frac{n(n-1)}{2}, n \le m.
    \end{array}
    \right.
    \end{equation}
\end{proof}

The following remark is derived from the above proof, which gives us a characterisation of the max-cuts of $S_n^m$ and thus the minimum signatures of $-S_n^m$.
\begin{remark}\label{remark:minimum_negative_split}
    When $n>m$ and $(m+n)$ is even, there is one single way to gain a max-cut: 
\[
    V(S_n^m) = V_{\frac{n+m}{2}}\cup (V_{\frac{n-m}{2}}\cup V(\overline{K_m})),
\]
where $i = \frac{m+n}{2}$.

When $n>m$ and $(m+n)$ is odd, there are two ways to gain a max-cut: 
\[
    V(S_n^m) = V_{\frac{n+m+1}{2}}\cup (V_{\frac{n-m-1}{2}}\cup V(\overline{K_m})),
\]
where $i = \frac{m + n + 1}{2}$, or
\[
    V(S_n^m) = V_{\frac{n+m-1}{2}}\cup (V_{\frac{n-m+1}{2}}\cup V(\overline{K_m})),
\]
where $i = \frac{m + n - 1}{2}$.

When $n \le m$, there is one single way to gain a max-cut:
\[
    V(S_n^m) = V(K_n)\cup V(\overline{K_m}),
\]
where $i = n$.
\end{remark}

We focus on the case when $n > m$. The following Proposition show that we only need to exhibit one $n_0 > m$ such that $-S_{n_0}^m$ maximises the frustration index uniquely to draw the same conclusion for all $n \ge n_0$.

\begin{prop}\label{prop:S_n+k+1^n}
    Let $k \ge 1$. For any $m \ge 2$, if $-S_{m+k}^m$ maximises the frustration index of $S_{m+k}^m$ uniquely, then $-S_{m+k+1}^m$ maximises the frustration index of $S_{m+k+1}^m$ uniquely.
    
\end{prop}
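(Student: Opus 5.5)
The plan is to apply Corollary~\ref{cor:disjoint_vertex_subset} with $S$ a single vertex of the clique. This gives an upper bound on $l_{\max}(S_{m+k+1}^m)$ that matches $l(-S_{m+k+1}^m)$. The equality case then forces every one-vertex deletion to be extremal, and the hypothesis makes each such deletion antibalanced.

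Write $n=m+k$, so $n>m$. Let $v$ be a vertex of the $K_{n+1}$ part of $S_{n+1}^m$. Then $S_{n+1}^m - v \cong S_n^m$, and $d(v) = n + m = 2m+k$. By Corollary~\ref{cor:disjoint_vertex_subset} with $S=\{v\}$, using $l_{\max}(K_1)=0$ and the hypothesis $l_{\max}(S_n^m) = l(-S_n^m)$,
\[
    l_{\max}(S_{n+1}^m) \le l(-S_n^m) + \left\lfloor \frac{2m+k}{2} \right\rfloor .
\]
Next I would check from Lemma~\ref{lemma:negative_split} that the right-hand side equals $l(-S_{n+1}^m)$. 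The quantity inside the floor, $A(n) = \frac{(n-1)^2 + 2mn - m^2}{4}$, satisfies $A(n+1)-A(n) = m + \frac{k}{2} - \frac14$. A parity check on $n+m$ (equivalently on $k$) should then show that the floors differ by exactly $\lfloor \frac{2m+k}{2}\rfloor$. Small cases agree: for $m=2$ one gets $3\to5$ when $k=1$ and $5\to8$ when $k=2$. This establishes $l_{\max}(S_{n+1}^m) = l(-S_{n+1}^m)$.

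For uniqueness, let $\Sigma$ be any signature of $S_{n+1}^m$ with $l(\Sigma) = l_{\max}(S_{n+1}^m)$. For each clique vertex $v$, Theorem~\ref{thm:disjoint_vertex_subset} gives
\[
    l(\Sigma) \le l(\Sigma - v) + \left\lfloor \frac{d(v)}{2} \right\rfloor .
\]
Combined with the equality just proved, this yields $l(\Sigma - v) \ge l(-S_n^m) = l_{\max}(S_n^m)$. So $\Sigma - v$ attains the maximum, and by hypothesis it is switching equivalent to $-S_n^m$. In particular, every triangle of $\Sigma - v$ is negative.

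Now take any triangle of $S_{n+1}^m$. It uses at least two clique vertices, and there are $n+1 \ge m+2 \ge 4$ clique vertices, so some clique vertex $v$ lies outside it. That triangle lies in $\Sigma - v$ and is therefore negative. Fix a clique vertex $h$; it is dominating, so the star at $h$ is a spanning tree, and its fundamental cycles are triangles. Hence all fundamental cycles have the same signs in $\Sigma$ as in $-S_{n+1}^m$. By the fundamental-cycle criterion from Section~\ref{subsection:signed_graphs}, $\Sigma$ is switching equivalent to $-S_{n+1}^m$.

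The main obstacle I expect is the floor bookkeeping in the first step: showing $l(-S_{n+1}^m) - l(-S_n^m) = \lfloor \frac{2m+k}{2} \rfloor$ in both parities of $k$, by a case split on $(n-1)^2 + 2mn - m^2 \bmod 4$. If one parity fails, the fallback is to choose $S$ to be a different vertex or pair of vertices in Theorem~\ref{thm:disjoint_vertex_subset}.
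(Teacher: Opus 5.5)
Your proposal is correct and follows the paper's proof. It uses the same one-clique-vertex application of Theorem~\ref{thm:disjoint_vertex_subset} (with $d(v)=2m+k$) and the same identity $l(-S_{m+k+1}^m)=l(-S_{m+k}^m)+\lfloor\frac{2m+k}{2}\rfloor$. It also deletes a clique vertex missing a given triangle; your ``every triangle is negative, and the star at a dominating vertex has only triangles as fundamental cycles'' is just the contrapositive of the paper's ``not switching equivalent to $-S_{m+k+1}^m$ implies a positive triangle''.

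The floor check you flagged holds in both parities, so no fallback is needed. Since $l(-S_n^m)=\binom{n}{2}+mn-\lfloor\frac{(n+m)^2}{4}\rfloor$ for $n>m$, and $\lfloor\frac{(N+1)^2}{4}\rfloor-\lfloor\frac{N^2}{4}\rfloor=\lceil\frac{N}{2}\rceil$, the increment is $\lfloor\frac{n+m}{2}\rfloor$.
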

\begin{proof}
    It is not hard to check that 
    \begin{align*}
        l(-S_{m+k+1}^m) &= \left\lfloor \frac{(m+k)^2}{4} + \frac{m(m+k+1)}{2} - \frac{m^2}{4} \right\rfloor \\
        &= \left\lfloor \frac{(m+k-1)^2}{4} + \frac{m(m+k)}{2} - \frac{m^2}{4} \right\rfloor  + \left\lfloor \frac{2m+k}{2} \right\rfloor\\
        &= l(-S_{m+k}^m) + \left\lfloor \frac{2m+k}{2} \right\rfloor.
    \end{align*}
    $S_{m+k+1}^m$ is formed by adding a dominating vertex to $S_{m+k}^m$ and $|E(S_{m+k}^m)| = 2m+k$. Therefore, according to Theorem~\ref{thm:disjoint_vertex_subset}, if $l_{\max}(S_{m+k}^m) = l(-S_{m+k}^m) = \left\lfloor \frac{(m+k-1)^2}{4} + \frac{m(m+k)}{2} - \frac{m^2}{4} \right\rfloor $, then $l_{\max}(S_{m+k+1}^m) = l(-S_{m+k+1}^m) = \left\lfloor \frac{(m+k)^2}{4} + \frac{m(m+k+1)}{2} - \frac{m^2}{4} \right\rfloor$.

    Now we prove the uniqueness. For any signature $(S_{m+k+1}^m, \sigma')$ which is not switching equivalent to $-S_{m+k+1}^m$, there is at least one positive triangle in $(S_{m+k+1}^m, \sigma')$. Let the triangle be $(a, b, c, a)$. Removing a vertex $v \in V(K_{m+k+1}) \setminus \{a, b, c\}$ leads to a signed subgraph $(S_{m+k}^m, \sigma')$ which contains the positive triangle. Therefore, $(S_{m+k}^m, \sigma')$ is not switching equivalent to $-S_{m+k}^m$ and $l(S_{m+k}^m, \sigma') < l_{\max}(S_{m+k}^m)$. As a result, 
    \[
        l(S_{m+k+1}^m, \sigma') \le l(S_{m+k}^m, \sigma') + \left\lfloor \frac{2m+k}{2} \right\rfloor <  l_{\max}(S_{m+k}^m) + \left\lfloor \frac{2m+k}{2} \right\rfloor = l_{\max}(S_{m+k+1}^m),
    \]
    which means that $(S_{m+k+1}^m, \sigma')$ does not maximise the frustration index.
\end{proof}


We first settle the case for $m = 2$, with $n_0 = 3$.

\begin{lemma}\label{lemma:s_3^2}
    For the complete split graph $S_3^2 = K_3 \vee \overline{K_2}$, the all-negative signature maximises the frustration index uniquely, and
    \[
         l_{\max} (S_3^2) = l(-S_3^2) = 3.
    \]
\end{lemma}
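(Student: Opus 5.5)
The plan is to get the lower bound from Lemma~\ref{lemma:negative_split}, and both the upper bound and uniqueness from Theorem~\ref{thm:disjoint_vertex_subset} combined with Petersdorf's Theorem~\ref{thm:complete}. Label $V(K_3)=\{a,b,c\}$ and $V(\overline{K_2})=\{x,y\}$. The graph $S_3^2$ then has $9$ edges, and $x,y$ are non-adjacent. The key structural observation is that deleting either $x$ or $y$ leaves a copy of $K_4$.

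\emph{Value of $l(-S_3^2)$.} Lemma~\ref{lemma:negative_split} with $n=3>m=2$ gives
\[
l(-S_3^2)=\left\lfloor 1+3-1\right\rfloor=3.
\]
Equivalently, the max-cut $\{a,b\}\cup\{c,x,y\}$ has $6$ edges.

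\emph{Upper bound.} Apply Corollary~\ref{cor:disjoint_vertex_subset} with $S=\{x\}$ and $T=\{a,b,c,y\}$. Here $G[T]\cong K_4$, so Theorem~\ref{thm:complete} gives $l_{\max}(K_4)=\lfloor 9/4\rfloor=2$. Also $l_{\max}(G[S])=0$ and $d(S)=3$. Hence
\[
l_{\max}(S_3^2)\le 0+2+\lfloor 3/2\rfloor=3=l(-S_3^2).
\]

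\emph{Uniqueness.} Let $\sigma$ be a signature not switching equivalent to $-S_3^2$. Since $S_3^2$ is chordal, I expect the triangles to determine the switching class: $\sigma$ should be antibalanced if and only if every triangle is negative. To justify this, I would take the spanning tree consisting of the edges at $a$. Its fundamental cycles are all triangles (for example $abc$, $abx$, $acx$, and so on). By the fundamental-cycle criterion in Section~\ref{subsection:signed_graphs}, $\sigma$ must therefore contain a positive triangle. No triangle contains both $x$ and $y$, because they are non-adjacent. So this positive triangle avoids one of them, say $x$ by symmetry.

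Now $\Sigma[T]$, with $T=V\setminus\{x\}$, is a signed $K_4$ containing a positive triangle. It is thus not in the switching class of $-K_4$. By the uniqueness part of Theorem~\ref{thm:complete}, $l(\Sigma[T])\le 1$. Theorem~\ref{thm:disjoint_vertex_subset} with $S=\{x\}$ then yields
\[
l(S_3^2,\sigma)\le 0+1+1=2<3.
\]
So only the switching class of $-S_3^2$ attains the maximum.

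The only step needing care is the claim that non-antibalanced implies the existence of a positive triangle. I expect the fundamental-cycle argument above to settle it directly.
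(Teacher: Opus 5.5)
Your proof is correct, but it takes a different route from the paper's.

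\textbf{The paper's route.} The paper gets the upper bound by viewing $S_3^2$ as the wheel $W_4$ plus one chord, so $l_{\max}(S_3^2)\le l_{\max}(W_4)+1=3$ by its wheel results. For uniqueness, it lets $T$ be the vertex set of a positive triangle and applies Theorem~\ref{thm:disjoint_vertex_subset} to $T$ and its complement, with two cases:
\begin{itemize}
\item If $T$ meets $\overline{K_2}$, then $d(T)=5$ and the bound $2$ is immediate.
\item If $T=V(K_3)$, then $d(T)=6$ and the theorem only gives $3$. An extra local argument is needed: after making $\Sigma[T]$ positive, one of the two degree-$3$ vertices of $\overline{K_2}$ carries at least two negative edges and can be switched.
\end{itemize}

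\textbf{Your route.} You use the single split $S=\{x\}$, $T=V\setminus\{x\}\cong K_4$, and let Theorem~\ref{thm:complete} do all the work. Its value gives the upper bound, and its uniqueness part gives $l(\Sigma[T])\le 1$. This avoids the case analysis, at the cost of importing Petersdorf's uniqueness for $K_4$. That is easy to verify directly: any other unbalanced signature of $K_4$ has exactly two negative triangles, which share one edge whose negation balances it.

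\textbf{Further remarks.} You also justify, via the star at the dominating vertex $a$, that a signature outside the switching class of $-S_3^2$ has a positive triangle; the paper uses this fact without comment. Your split is tight only at $n=3$: for $S_4^2$ it gives $4+2=6>5=l(-S_4^2)$. That is all that is needed, since the lemma serves only as the base case of the induction.
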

\begin{proof}
    Since $S_3^2$ can be seen as a wheel graph $W_4$ with two non-adjacent vertices being connected, we know that $l_{\max}(S_3^2) \le l_{\max}(W_4) + 1 = 3$. Therefore, $l_{\max} (S_3^2) = l(-S_3^2) = 3$. Additionally, when $\Sigma = (S_3^2, \sigma)$ contains a positive triangle with vertex set $T$ and $S = V(\Sigma) \setminus T$, $\Sigma[S]$ is a path or $\overline{K_2}$. When $\Sigma[S]$ is a path, which means $|T \cap V(K_3)| = 2$, we have $d(T) = 5$. Therefore, $l(\Sigma) \le 2$ according to Theorem~\ref{thm:disjoint_vertex_subset}. When $\Sigma[S]$ is $\overline{K_2}$, $\Sigma$ can be switched so that $\Sigma[T]$ is an all-positive triangle, and $d^-(T) \le \frac{d(T)}{2} = 3$. However, since every vertex in $V(\overline{K_2}) = S$ has degree only $3$, $\Sigma$ is not minimum if $d^-(S) = d^-(T) = 3$. Therefore, $l(\Sigma) \le 2$.
\end{proof}

Combining Proposition~\ref{prop:S_n+k+1^n} and Lemma~\ref{lemma:s_3^2}, we have the following theorem.

\begin{theorem}\label{thm:S_n^2}
    For the complete split graph $S_n^2$ where $n \ge 3$, the all-negative signature maximises the frustration index uniquely, and 
    \[
        l_{\max}(S_n^2) = l(-S_n^2) = \left\lfloor \frac{(n-1)^2}{4} + n - 1\right\rfloor.
    \]
\end{theorem}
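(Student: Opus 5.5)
The theorem follows by induction on $n$, using Lemma~\ref{lemma:s_3^2} as the base case and Proposition~\ref{prop:S_n+k+1^n} with $m = 2$ as the inductive step. After that, only the closed form of the value needs checking.

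\textbf{Base case.} For $n = 3$, Lemma~\ref{lemma:s_3^2} already states that $-S_3^2$ maximises the frustration index uniquely, with $l_{\max}(S_3^2) = l(-S_3^2) = 3$.

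\textbf{Inductive step.} Suppose the statement holds for $S_{2+k}^2$ with $k \ge 1$. Proposition~\ref{prop:S_n+k+1^n}, applied with $m = 2$, transfers both the equality $l_{\max} = l(-\,\cdot\,)$ and the uniqueness of the maximising switching class to $S_{2+k+1}^2$. Hence $-S_n^2$ maximises the frustration index uniquely for every $n \ge 3$.

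\textbf{The value.} Lemma~\ref{lemma:negative_split} applies in the case $n > m = 2$ and gives
\[
l(-S_n^2) = \left\lfloor \frac{(n-1)^2}{4} + \frac{2n}{2} - \frac{4}{4} \right\rfloor = \left\lfloor \frac{(n-1)^2}{4} + n - 1 \right\rfloor .
\]
This matches the claimed formula, and combined with $l_{\max}(S_n^2) = l(-S_n^2)$ it completes the theorem.

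\textbf{Expected difficulty.} The only delicate point is whether the inductive step in Proposition~\ref{prop:S_n+k+1^n} is valid as stated, in particular the floor identity used there. For $m = 2$ that identity reads
\[
\left\lfloor \frac{(k+2)^2}{4} + k + 2 \right\rfloor = \left\lfloor \frac{(k+1)^2}{4} + k + 1 \right\rfloor + \left\lfloor \frac{k+4}{2} \right\rfloor .
\]
The increment added to $S_{2+k}^2$ is $\lfloor d(v)/2 \rfloor$, where $v$ is the new dominating vertex with $d(v) = |V(S_{2+k}^2)| = k+4$. I would confirm the identity by checking $k$ even and $k$ odd separately. Since the proposition is stated earlier in the paper, I would simply invoke it.
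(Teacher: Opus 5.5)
Your proposal is correct and follows the paper's own argument. The paper obtains the theorem by combining Lemma~\ref{lemma:s_3^2} as the base case $n=3$ with Proposition~\ref{prop:S_n+k+1^n} for $m=2$, and reads the value off Lemma~\ref{lemma:negative_split}; your floor identity does check out in both parities. You are also right that the increment is $\lfloor d(v)/2\rfloor$ with $d(v)=|V(S_{2+k}^2)|=k+4$: the paper's ``$|E(S_{m+k}^m)|=2m+k$'' should read $|V(S_{m+k}^m)|$.
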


We now settle the case $m = 3$, with $n_0 = 5$.




\begin{lemma}\label{lemma:S_5^3}
    For the complete split graph $S_5^3 = K_5 \vee \overline{K_3}$, the all-negative signature maximises the frustration index uniquely.
\end{lemma}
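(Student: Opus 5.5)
The plan is to argue by contradiction through Theorem~\ref{thm:properly switched}, after first recording the target value and the crude bounds. By Lemma~\ref{lemma:negative_split} with $n=5$, $m=3$ we get $l(-S_5^3)=\lfloor 4+\frac{15}{2}-\frac94\rfloor = 9$, out of $|E(S_5^3)|=25$ edges. A signature $\sigma$ is not switching equivalent to $-S_5^3$ exactly when $(S_5^3,\sigma)$ contains a positive triangle, since the triangles span the cycle space of this graph. It therefore suffices to show that every such $\Sigma=(S_5^3,\sigma)$ has $l(\Sigma)\le 8$. Write $K=V(K_5)$ and $I=V(\overline{K_3})$. A triangle either lies inside $K$ or uses exactly two vertices of $K$ and one of $I$.

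The first tool is Theorem~\ref{thm:disjoint_vertex_subset} combined with results already proved, exactly as in Lemma~\ref{lemma:s_3^2}. Let $v\in I$ be a vertex not on the positive triangle (one always exists). Then $\Sigma-v$ is a signed $S_5^2$ containing a positive triangle. By Theorem~\ref{thm:S_n^2}, $-S_5^2$ uniquely attains $l_{\max}(S_5^2)=8$, so $l(\Sigma-v)\le 7$. Since $d(v)=5$, Theorem~\ref{thm:disjoint_vertex_subset} gives $l(\Sigma)\le 7+2=9$. Other partitions give no better bound:
\begin{itemize}
\item the positive triangle inside $K$ against its complement $S_2^3$ gives $0+2+7=9$;
\item a $K_4\subseteq K$ containing the triangle gives $1+0+8=9$, using Theorem~\ref{thm:complete} for $K_4$.
\end{itemize}
So these bounds all stop at $9$, and the whole difficulty is to rule out equality.

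For that, suppose $l(\Sigma)=9$ and take a minimum signature of $\Sigma$ with exactly $9$ negative edges. Equality in the first bound forces three things:
\begin{itemize}
\item $\Sigma-v$ is minimum with exactly $7$ negative edges;
\item $d^-(v)=2$ exactly;
\item no switching of $v$ or of any other set lowers the count.
\end{itemize}
I would run the same argument for each vertex of $I$ off the triangle, and for each $K_4\subseteq K$ containing the triangle, so that many such equality constraints hold simultaneously. Then I would apply Theorem~\ref{thm:properly switched} to small sets $S$: single vertices, which give $d^-(u)\le 3$ for $u\in K$ and $d^-(w)\le 1$ for $w\in I$ (since $d(w)=5$); pairs; the triangle itself; and sets of the form $\{w\}\cup X$ with $X\subseteq K$. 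Counting negative edges, the $I$--$K$ edges contribute at most $3$ in total. So at least $6$ negative edges lie inside $K$, and $\Sigma[K]$ is a signed $K_5$ with $l\le 4$ by Theorem~\ref{thm:complete}. The aim is to show that $9$ negative edges, placed with a positive triangle present, always leave some set $S$ with $d^-(S)>d^+(S)$, contradicting minimality.

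This last case analysis is the main obstacle. I would organise it by the number $j\in\{0,1,2,3\}$ of negative $I$--$K$ edges, so that $\Sigma[K]$ has $9-j$ negative edges. If $\Sigma[K]$ has more than $4$ negative edges it is not minimum, which yields a set $X\subseteq K$ with $d^-_{K}(X)>d^+_K(X)$. I would then try to enlarge $X$ by suitable vertices of $I$ to obtain a violating set in all of $\Sigma$. The positive triangle is what breaks the antibalanced extremal structure of Remark~\ref{remark:minimum_negative_split}, where $K$ is split $4+1$ and all of $I$ sits with the singleton: for $\Sigma$ that structure is the only way to reach $9$. If the hand analysis becomes unwieldy, the finite check over switching classes (fundamental cycles relative to the star at one vertex of $K$) is a fallback.
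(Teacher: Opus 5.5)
Your setup is sound: $l(-S_5^3)=9$ is correct, as is the reduction to proving $l(\Sigma)\le 8$ whenever $\Sigma$ contains a positive triangle. Each of your three partitions giving $l(\Sigma)\le 9$ is also correct. But the entire content of the lemma is excluding $l(\Sigma)=9$. For that step you only describe how you would organise a case analysis, or fall back on an unexecuted computer search over the $2^{18}$ switching classes, so nothing is actually proved.

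The sketch also contains errors that would derail it:
\begin{itemize}
\item Minimality applied to $S=\{w\}$ with $w\in I$ gives $d^-(w)\le d^+(w)$, i.e.\ $d^-(w)\le 2$ since $d(w)=5$, not $d^-(w)\le 1$. This even contradicts your own ``$d^-(v)=2$ exactly''.
\item Consequently the $I$--$K$ edges can carry up to six negative edges. The claim that at least six negative edges lie inside $K$ fails, and the split by $j\in\{0,1,2,3\}$ misses cases.
\item The equality constraints you extract from Theorem~\ref{thm:disjoint_vertex_subset} hold only for the particular switching built in its proof, where $\Sigma-v$ is made minimum first. A minimum signature of $\Sigma$ need not restrict to a minimum one on $\Sigma-v$. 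Different choices of $v$ or of the $K_4$ yield different switchings, so you cannot impose all these constraints on a single signature.
\end{itemize}

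The missing idea is to choose the partition so that equality pins down a switching class. First reduce to a positive triangle $T=\{a,b,c\}$ with $a,b\in K$ and $c\in I$. If instead $abc\subseteq K$ is positive, then for any $v\in I$ the product of the signs of $vab$, $vbc$, $vca$ equals the sign of $abc$, so one of these mixed triangles is positive.

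Then split along $T$ itself. Here $V\setminus T$ induces $S_3^2$ and $d(T)=13$, so the bound reads $9=0+3+6$. Equality forces $\Sigma[V\setminus T]$ to lie in the switching class of $-S_3^2$, by the uniqueness part of Lemma~\ref{lemma:s_3^2}, and forces $d^-(T)=6$. This yields a concrete normal form: the triangle $v_1v_2v_3$ of $K\setminus T$ is all negative, and all edges from it to $v_4,v_5\in I$ are positive. Counting the vertices with $d^-(v)=\frac{d(v)-1}{2}$ versus smaller then essentially determines the signature. It exhibits a set violating $d^-(S)\le d^+(S)$, namely $\{a,v_3,v_5\}$ or $\{a,b,v_2\}$; this is the paper's argument.

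Your split $\Sigma-v$ lacks this leverage. The positive triangle lies on the large side, where equality only gives $l(\Sigma-v)=7$. Theorem~\ref{thm:S_n^2} says nothing about which signatures of $S_5^2$ attain that non-maximal value.
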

\begin{proof}
    Let $(S_5^3, \sigma)$ be a minimum signature. Note that all vertices in $S_5^3$ have an odd degree. Let $X \subseteq V(S_5^3, \sigma)$ be the set of vertices such that $d^-(v) = \frac{d(v)-1}{2}$ for every vertex $v \in X$, and $Y \subseteq V(S_5^3, \sigma)$ the set of vertices such that $d^-(v) \le \frac{d(v)-3}{2}$ for every vertex $v \in Y$.

    We have the following claim.

    \begin{claim}\label{claim:positive_p3}
        There cannot exist an all-positive path $(a, b, c)$ whose vertices are in $X$ and two endpoints $a, c\in V(\overline{K_3})$, otherwise $d^-(\{a, b, c\}) - d^+(\{a, b, c\}) = 1$. 
    \end{claim}
    \begin{proof}
        Supposing that such path exists, we have $d^+(v) - d^-(v) = 1$ for every $v \in \{a, b, c\}$. Therefore,
        \[
            d^-(\{a, b, c\}) - d^+(\{a, b, c\}) = (d^-(a) + d^-(b) + d^-(c)) - (d^+(a) + d^+(b) + d^+(c) - 4) = 1,
        \]
        which is a contradiction.
    \end{proof}

    If $X = V(S_5^3, \sigma)$,
     \[
         |E^-(S, \sigma)| = \frac{5 \times 3 + 2 \times 3}{2}=\frac{21}{2},
     \]
    which is a contradiction. Therefore, $|Y| \ge 1$, and $l_{\max}(S_5^3) \le 10$. It is not hard to check that, for a minimum signature $(S_5^3, \sigma_0)$ that is switching equivalent to $-S_5^3$, $|E^-(S_5^3, \sigma_0)| = l(-S_5^3) = 9$. Now we prove this theorem by proving that whenever there is a positive triangle in $(S_5^3, \sigma)$, $l(S_5^3, \sigma) \le 8$.

    Let $(S_5^3, \sigma)$ be a minimum signature with a positive triangle $(a, b, c, a)$, $T = \{a, b, c\}$, and $S = V(S_5^3, \sigma) \setminus T = \{v_1, v_2, v_3, v_4, v_5\}$. Since $S$ and $T$ are disjoint, we can assume that $\Sigma = (S_5^3, \sigma)$ is switched so that $\Sigma[T]$ is all-positive, $\Sigma[S]$ is minimum (any minimum signature in the switching class), and $d^-(T) < d^+(T)$.

    There are two cases for $T$.
    



    \begin{case}\label{case:T_notin_K_5}
        $T \cap V(V_5) = \{a, b\}$, and $T \cap V(\overline{K_3}) = \{c\}$. 
    \end{case}
    In this case, $|\Sigma[S]| = S_3^2 = K_3 \vee \overline{K_2}$. According to Theorem~\ref{thm:disjoint_vertex_subset}, $l(\Sigma) \le 9$. 

    Suppose that $l(\Sigma) = 9$, and $\Sigma$ is minimum. According to Lemma~\ref{lemma:s_3^2} and Theorem~\ref{thm:disjoint_vertex_subset}, $\Sigma[S]$ must be switching equivalent to $-S_3^2$, and $d^-(T) = 6$. In fact, $\Sigma[S]$ can be any minimum signature in the switching class of $-S_3^2$.
    
    Let $S \cap V(K_5) = \{v_1, v_2, v_3\}$, and $S \cap V(\overline{K_3}) = \{v_4, v_5\}$. $\Sigma[S]$ can be the signature such that the triangle $(v_1, v_2, v_3, v_1)$ is all-negative, and all the edges between $\{v_1, v_2, v_3\}$ and $\{v_4, v_5\}$ are positive according to Remark~\ref{remark:minimum_negative_split}. 
    
    Supposing that $|S \cap Y| \ge 2$, we have 
    \[
        \sum_{i=1}^{5}d^-(v_i) \le \sum_{v_i\in S \cap X} \frac{d(v_i)-1}{2} + \sum_{v_i\in S \cap Y} \frac{d(v_i)-3}{2} =  \sum_{i=1}^{5}\frac{d(v_i)}{2} - \frac{9}{2} = 11.
    \]
    Since the triangle $(v_1, v_2, v_3, v_1)$ is all-negative in $\Sigma[S]$, we have $d^-(T) = d^-(S) =  \sum_{i=1}^{5}d^-(v_i) - 6 = 5$, which is a contradiction. Supposing that $S \subseteq X$, we have $\sum_{i=1}^{5}d^-(v_i) = \sum_{i=1}^{5}\frac{d(v_i)}{2} - \frac{5}{2} = 13$ and thus $d^-(T) = 7$, which is a contradiction. Therefore, $|S \cap Y| = 1$. Additionally, supposing that the only vertex $v \in S \cap Y$ satisfies $d^-(v) \le \frac{d(v)-5}{2}$, we also have $\sum_{i=1}^{5}d^-(v_i) \le \sum_{i=1}^{5}\frac{d(v_i)}{2} - \frac{9}{2} = 11$, which is a contradiction. Taken together, We have $|S \cap Y| = 1$ and the only vertex $v \in S \cap Y$ satisfies $d^-(v) = \frac{d(v)-3}{2}$. 
    
    The only vertex in $S \cap Y$ must be in $S \cap V(\overline{K_3})$ according to Claim~\ref{claim:positive_p3}. Let $v_5 \in S \cap Y$ by symmetry, and then we have $\{v_1, v_2, v_3, v_4\} \subseteq X$. Therefore, $v_4$ must be negatively adjacent to both $a$ and $b$, and $v_5$ is negatively adjacent to exactly one vertex in $\{a, b\}$, which can be $b$ by symmetry. Since $d^-(c) \le 2$, at least one vertex in $\{v_1, v_2, v_3\}$ is positively adjacent to $c$, say $v_1$ by symmetry. Since $v_1$ is positively adjacent to both $c$ and $v_4$, we have $c \in Y$ according to Claim~\ref{claim:positive_p3}, which means $d^-(c) \le 1$. 
    Now we discuss by two cases, as shown in Figure~\ref{fig:s53_uniqueness1} and Figure~\ref{fig:s53_uniqueness2}. Note that they share the same black edges.

    \begin{subcase}\label{subcase:0}
        $d^-(c) = 0$.
    \end{subcase}

    In this case, we have $a, b \in X$, otherwise $d^-(T) \le 5$.     Note that each vertex $ v \in \{v_1, v_2, v_3\}$ is negatively adjacent to exactly one vertex in $T$ as $v \in X$ and  $(v_1, v_2, v_3, v_1)$ is all-negative. Since $b$ is negatively adjacent to both $v_4$ and $v_5$, $b$ must be negatively adjacent to exactly one vertex in $\{v_1, v_2, v_3\}$, which can be $v_3$ by symmetry as now $c$ is positively adjacent to all vertices in $\{v_1, v_2, v_3\}$. Additionally, since $a$ is negatively adjacent to $v_4$ and positively adjacent to $v_5$, $a$ must be negatively adjacent to two vertices in $\{v_1, v_2, v_3\}$, which can only be $v_1$ and $v_2$. Therefore, there is only one possible minimum signature with respect to isomorphism, as shown in Figure~\ref{fig:s53_uniqueness1}. In this case, $d^-(\{a, v_3, v_5\}) = 7 > d^+(\{a, v_3, v_5\}) = 6$, which is a contradiction.

    \begin{subcase}\label{subcase:1}
        $d^-(c) = 1$.
    \end{subcase}
    
    In this case, $c$ is negatively adjacent to one vertex in $\{v_2, v_3\}$, which can be $v_2$ by symmetry. In this case, $d^-(\{a, b\}) = 5$, and $v_2$ is positively adjacent to both $a$ and $b$. Therefore, $d^-(\{a, b, v_2\}) = d^-(\{a, b\}) + d^-(v_2) = 8 > d^+(\{a, b, v_2\}) = 7$, which is a contradiction.

    Taken together, $l(\Sigma) \le 8$.
    
    \begin{figure}[htbp]
        \centering
        \begin{subfigure}{0.4\textwidth}
            \centering\includegraphics[width=\textwidth]{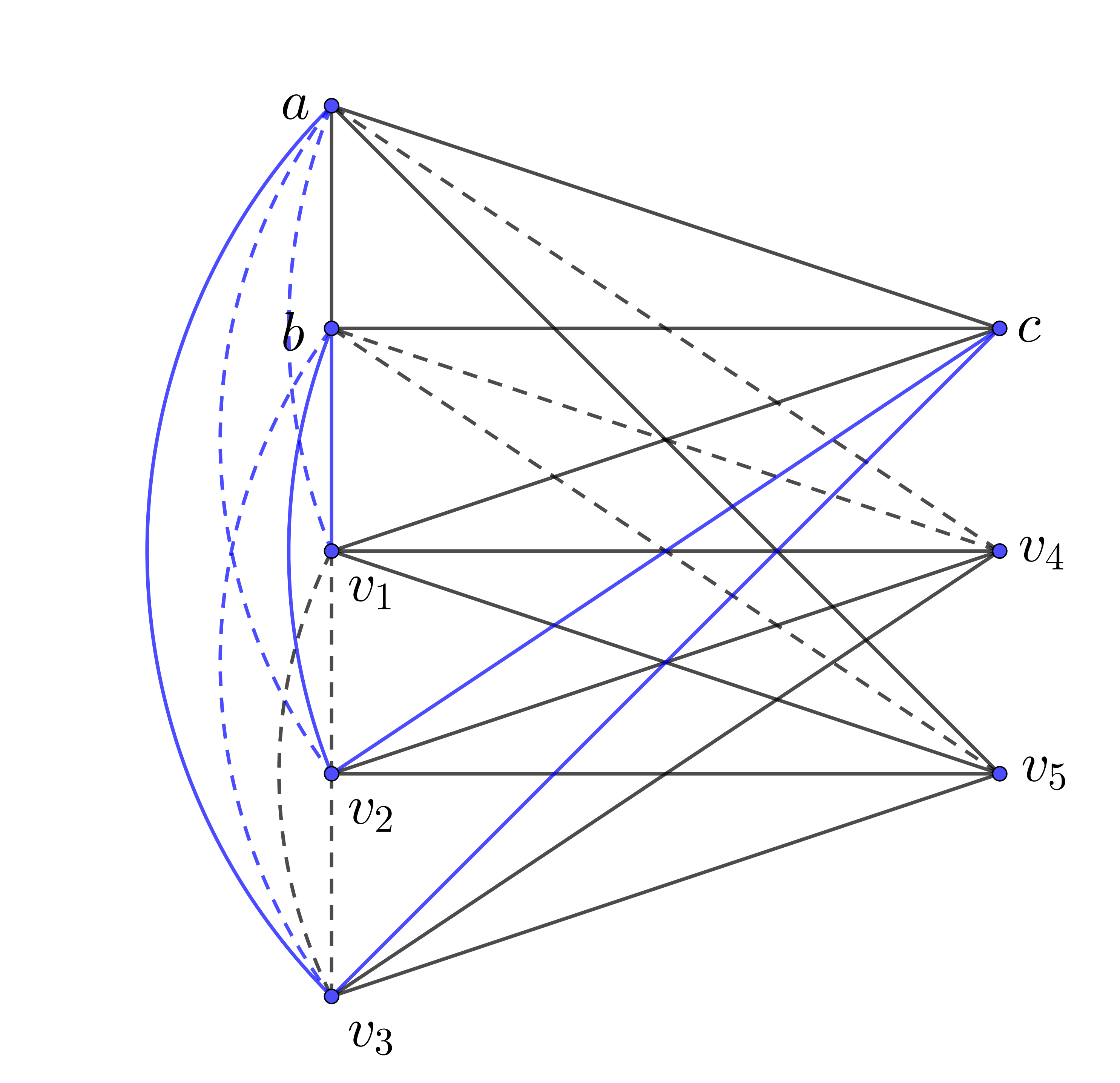}
            \caption{$d^-(c) = 0$}
            \label{fig:s53_uniqueness1}
        \end{subfigure}
        \begin{subfigure}{0.4\textwidth}
            \centering\includegraphics[width=\textwidth]{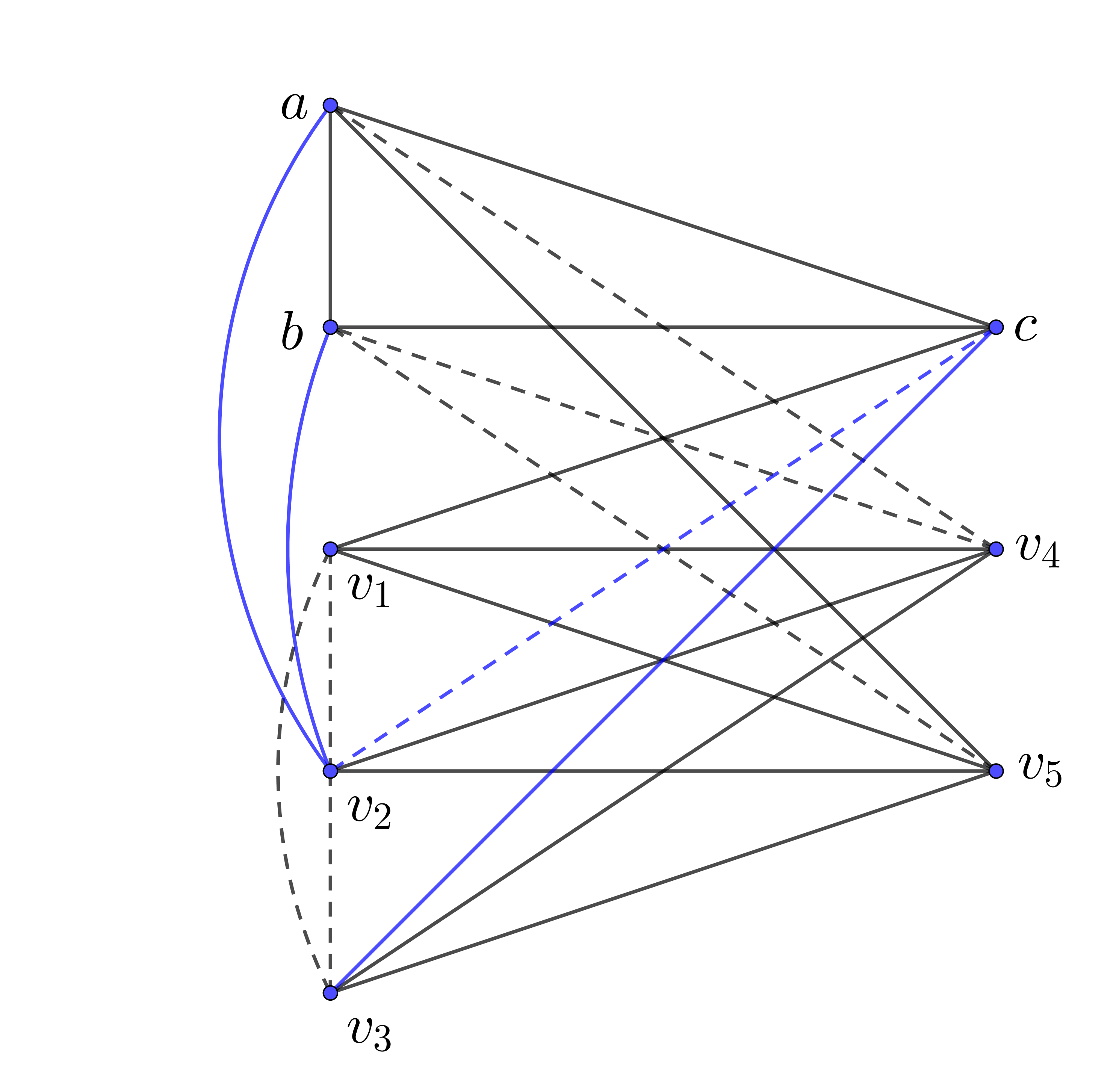}
            \caption{Part of the edges when $d^-(c) = 1$}
            \label{fig:s53_uniqueness2}
        \end{subfigure}
        \caption{Two subcases of Case~\ref{case:T_notin_K_5}}
        \end{figure}        

    \begin{case}\label{case:T_in_K_5}
        $T \subseteq V(K_5)$.
    \end{case}
    In this case, $|\Sigma[S]| = K_1 \vee S_3$. According to Corollary~\ref{corollary:star} and Theorem~\ref{thm:disjoint_vertex_subset}, $l(\Sigma) \le 9$.

    Suppose that $l(\Sigma) = 9$, and $\Sigma$ is minimum. According to the discussion in Case~\ref{case:T_notin_K_5}, there cannot be a positive triangle with two vertices in $V(K_5)$ and one vertex in $V(\overline{K_3})$. For a vertex $v \in V(\overline{K_3})$, $v$ must be negatively adjacent to exactly one vertex in $\{a, b\}$, exactly one vertex in $\{b, c\}$, and exactly one vertex in $\{a, c\}$, which is impossible. Therefore, we have a contradiction, and $l(\Sigma) \le 8$.

\end{proof}

\begin{theorem}\label{thm:S_n^3}
    For the complete split graph $S_n^3$ where $n \ge 5$, the all-negative signature maximises the frustration index uniquely, and
    \[
    l_{\max}(S_n^3) = l(-S_n^3) = \left\lfloor \frac{(n-1)^2}{4} + \frac{3n}{2} - \frac{9}{4}\right\rfloor.
\]
\end{theorem}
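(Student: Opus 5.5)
The plan mirrors the proof of Theorem~\ref{thm:S_n^2}: combine the base case Lemma~\ref{lemma:S_5^3} with the inductive step Proposition~\ref{prop:S_n+k+1^n}, and compute the value via Lemma~\ref{lemma:negative_split}.

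\emph{Value.} For $n \ge 5 > 3 = m$, Lemma~\ref{lemma:negative_split} with $m=3$ gives
\[
    l(-S_n^3) = \left\lfloor \frac{(n-1)^2}{4} + \frac{3n}{2} - \frac{9}{4} \right\rfloor.
\]
Substituting $n=5$ gives $\lfloor 4 + 7.5 - 2.25 \rfloor = 9$. This agrees with the value $9$ used in the proof of Lemma~\ref{lemma:S_5^3}.

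\emph{Base case.} Lemma~\ref{lemma:S_5^3} states that $-S_5^3$ maximises the frustration index of $S_5^3$ uniquely. In the notation of Proposition~\ref{prop:S_n+k+1^n}, this is the hypothesis with $m=3$ and $k=2$, since $S_5^3 = S_{m+k}^m$.

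\emph{Induction.} Argue by induction on $n \ge 5$, writing $n = 3+k$ with $k \ge 2$. Suppose $-S_{3+k}^3$ maximises the frustration index of $S_{3+k}^3$ uniquely. Proposition~\ref{prop:S_n+k+1^n} applies because $k \ge 1$ and $m = 3 \ge 2$, and it yields the same statement for $S_{4+k}^3$. Hence for every $n \ge 5$ the all-negative signature maximises the frustration index of $S_n^3$ uniquely. In particular $l_{\max}(S_n^3) = l(-S_n^3)$, which equals the displayed formula by Lemma~\ref{lemma:negative_split}.

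\emph{Main obstacle.} Essentially all of the difficulty sits in the base case, Lemma~\ref{lemma:S_5^3}, which is already proved. The one point to check in the induction is that Proposition~\ref{prop:S_n+k+1^n} is used in the form stated in its proof. Its proof deletes a vertex of $K_{m+k+1}$ that avoids a given positive triangle, and this needs $m+k+1 \ge 4$, which holds here. Its proof also uses the increment $\lfloor \frac{2m+k}{2} \rfloor$, coming from Theorem~\ref{thm:disjoint_vertex_subset}. This increment should be checked against the difference of the floor expressions from Lemma~\ref{lemma:negative_split}, but that check is part of the proposition and is taken as given.
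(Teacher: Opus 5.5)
Your proposal is correct and matches the paper's (implicit) argument: the theorem follows by taking Lemma~\ref{lemma:S_5^3} as the base case ($m=3$, $k=2$), iterating Proposition~\ref{prop:S_n+k+1^n}, and reading off the value from Lemma~\ref{lemma:negative_split}, exactly as Theorem~\ref{thm:S_n^2} is obtained from Lemma~\ref{lemma:s_3^2}. Both of your side checks hold: since $m+k+1 \ge 6$, a vertex of $K_{m+k+1}$ avoiding the positive triangle exists, and the increment $l(-S_{n+1}^m) - l(-S_n^m) = \lfloor \frac{n+m}{2} \rfloor = \lfloor \frac{2m+k}{2} \rfloor$ agrees with the formula.
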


\begin{remark}\label{remark:S43}
    When $\Sigma =(S_4^3, \sigma)$ contains a positive triangle with vertex set $T$ and $S = V(\Sigma) \setminus T$, $\Sigma[S]$ is either isomorphic to the fan graph $F_3$ or isomorphic to the star $S_3$. According to Theorem~\ref{thm:disjoint_vertex_subset}, we have $l(\Sigma) \le 6$. Therefore, $l_{\max}(S_4^3) = l(-S_4^3) = 6$. However, the all-negative does not maximise the frustration index uniquely. Figure~\ref{fig:counter_exampleS4} shows a minimum signature maximising the frustration index. This signature is not switching equivalent to the all-negative signature, for it contains a positive triangle.
\end{remark}
\begin{figure}[htbp]    
    \centering
    \includegraphics[width=0.33\linewidth]
    {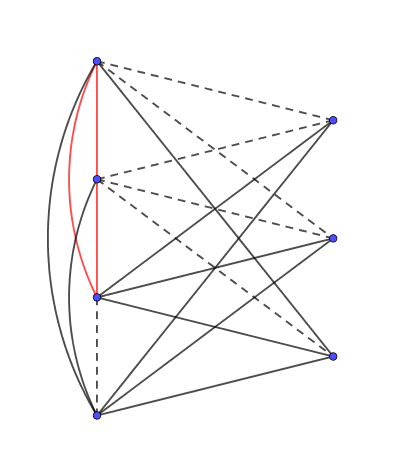}
    \caption{Another signature maximising the frustration index of $S_4^3$}
    \label{fig:counter_exampleS4}
    \end{figure}

As an open problem for future work, we ask: for all $m$, does there exist $n_0 > m$ such that $-S_{n_0}^m$ maximises the frustration index uniquely?

\end{document}